\newtheorem{thm}{Theorem}[section]
\newtheorem{lem}[thm]{Lemma}
\newtheorem{dfn}{Definition}
\providecommand{\e}{\varepsilon}
\providecommand{\f}{\varphi}
\providecommand{\x}{\xi}
\providecommand{\R}{\mathbb{R}}
\title{Volumes spanned by $k$-point configurations in $\R^d$}
\author{Belmiro Galo and Alex McDonald}
\begin{document}

\maketitle

\begin{abstract}
Given a $k$-point configuration $x\in (\R^d)^k$, we consider the $\binom{k}{d}$-vector of volumes determined by choosing any $d$ points of $x$.  We prove that a compact set $E\subset \R^d$ determines a positive measure of such volume types if the Hausdorff dimension of $E$ is greater than $d-\frac{d-1}{2k-d}$.  This generalizes results of Greenleaf, Iosevich, and Mourgoglou \cite{GIM15}, Greenleaf, Iosevich, and Taylor \cite{GIT20}, and the second listed author \cite{M20}.
\end{abstract}

\section{Introduction}

A recurrent theme throughout mathematics is to show that if one has a set which is sufficiently structured in some way and applies a non-trivial map, the image is also structured.  A classic example of this theme is the Falconer distance problem, which is one of the most important and interesting problems in geometric measure theory.  Given a set $E\subset\R^d$, define its distance set to be

\[
\Delta(E)=\{|x-y|:x,y\in E\}.
\]

The Falconer distance problem asks how large the Hausdorff dimension of a compact set $E$ must be to ensure that $\Delta(E)$ has positive Lebesgue measure.  Falconer \cite{F86} proved that $\dim E>\frac{d+1}{2}$ implies $\Delta(E)$ has positive measure, where here and throughout $\dim E$ denotes the Hausdorff dimension of the set $E$.  He also found a family of examples $\{E_s\}$ such that for any $s<\frac{d}{2}$, one has $\dim E_s>s$ and $\mathcal{L}_1(\Delta(E))=0$.  This suggests what is now known as the Falconer distance problem, which asks for the smallest $s$ such that $\dim E>s$ implies $\mathcal{L}_1(\Delta(E))>0$.  Falconer's work implies this threshold is between $\frac{d}{2}$ and $\frac{d+1}{2}$, and it is conjectured that $\frac{d}{2}$ is in fact the correct threshold.  The first major results were due to Wolff \cite{W99} and Erdogan \cite{E06}, proving the threshold $\frac{d}{2}+\frac{1}{3}$ in the case $d=2$ and $d\geq 3$, respectively.  These were the best results until recently, when a number of improvements were made using the decoupling theorem of Bourgain and Demeter \cite{BD15}.  The best results currently state that for compact $E\subset \R^d$, the distance set $\Delta(E)$ has positive Lebesgue measure if $\dim E>s_d$ where $s_2=5/4$ \cite{GIOW20}, $s_3=9/5$ \cite{DGOWWZ18}, $s_d=\frac{d}{2}+\frac{1}{4}$ when $d\geq 4$ is even \cite{DIOWZ20}, and $s_d=\frac{d}{2}+\frac{1}{4}+\frac{1}{4(d-1)}$ when $d\geq 4$ is odd \cite{DZ19}.  \\

A key generalization of the Falconer distance problem comes from considering geometric properties of point configurations.  We first establish some notation.  We will use superscripts to denote vectors and subscripts to denote components of vectors, so for a configuration $x\in (\R^d)^k$ we have $x=(x^1,\cdots,x^k)$ where each $x^j\in\R^d$ has components $x^j=(x_1^j,\cdots,x_d^j)$.  The most direct generalization of the Falconer distance problem in this context is the problem of congruence classes of such configurations.  For $k\leq d$, the congruence class of $x\in(\R^d)^{k+1}$ is determined by the $\binom{k+1}{2}$-tuple of distances $|x^i-x^j|$.  Define $\Delta_k(E)$ to be the set of vectors $\{|x^i-x^j|\}_{1\leq i< j< k+1} \}$ with $x^i\in E$ for all $i$. Note that the set $\Delta_1(E)$ coincides with $\Delta(E)$ defined above.  Greenleaf, Iosevich, Liu, and Palsson \cite{GILP15} proved that $\Delta_k(E)$ has positive $\binom{k+1}{2}$ dimensional Lebesgue measure if $\dim E>d- \frac{d-1}{k+1}$.  The proof strategy was built on the fact that two configurations are congruent if and only if there is an isometry mapping one to the other, which allowed the authors to study the problem in terms of the group action.  The group action framework was instrumental in the proof of the discrete predecessor of the Falconer distance problem, known as the Erdos distinct distance problem, which asks for the minimum number of distances determined by a set of $N$ points in $\R^d$.  In that context the group action framework was introduced by Elekes and Sharir \cite{ES11} and ultimately used by Guth and Katz to resolve the problem in the plane, obtaining the bound $N/\log N$ which is optimal up to powers of $\log$ \cite{GK15}. \\

The configuration congruence problem becomes more subtle when $k>d$.  This is because the system of distance equations becomes overdetermined, and the space of congruence classes can no longer be identified with the space of distance vectors $\R^{\binom{k+1}{2}}$.  Invoking the group action framework again, one would expect heuristically that the space of congruence classes should have dimension $d(k+1)-\binom{d+1}{2}$, since the space of configurations has dimension $d(k+1)$ and the space of isometries has dimension $\binom{d+1}{2}$.  Chatziconstantinou, Iosevich, Mkrtchyan, and Pakianathan \cite{CIMP17} proved that in fact this heuristic is correct, and obtained a non-trivial dimensional threshold.  Their proof used the theory of combinatorial rigidity.  Given a $(k+1) $-point configuration, they proved that the congruence class was determined (up to finitely many choices) if one fixes $d(k+1)-\binom{d+1}{2}$ strategically chosen distances.  They then used the group action framework to prove that $\Delta_k(E)$ has positive $d(k+1)-\binom{d+1}{2}$ dimensional measure if $\dim E>d-\frac{1}{k+1}$. \\

The key to the results in \cite{GILP15} and \cite{CIMP17} is the fact that the congruence relation can be described in terms of action of the isometry group on the space of configurations.  It is therefore natural to study other point configuration problems where congruence is replaced by other geometric relations with a corresponding group action invariance.  One such problem occurs by considering the volumes which are obtained by choosing any $d$ points of a configuration.  More precisely, we make the following definition.

\begin{dfn}
The \textbf{volume type} of $x\in (\R^d)^k$ is the vector

\[
\{\det(x^{j_1},\cdots,x^{j_d})\}_{1\leq j_1<\cdots <j_d\leq k}\in \R^{\binom{k}{d}}.
\]

For a set $E\subset\R^d$, let 

\[
\mathcal{V}_{k,d}(E)=\{\{\det(x^{j_1},\cdots,x^{j_d})\}_{1\leq j_1<\cdots <j_d\leq k}:x^1,...,x^k\in E\}
\]

be the set of volume types determined by points in $E$.  Finally, let $\mathcal{V}_{k,d}=\mathcal{V}_{k,d}(\R^d)$ be the space of all volume types of $k$-point configurations in $\R^d$. 
\end{dfn}

Thus, the volume type of a $k$-point configuration $x\in(\R^d)^k$ encodes all volumes obtained by choosing any $d$ points from $x$ (see figure 1).

\begin{figure}[h!]
\centering
\includegraphics[scale=0.4]{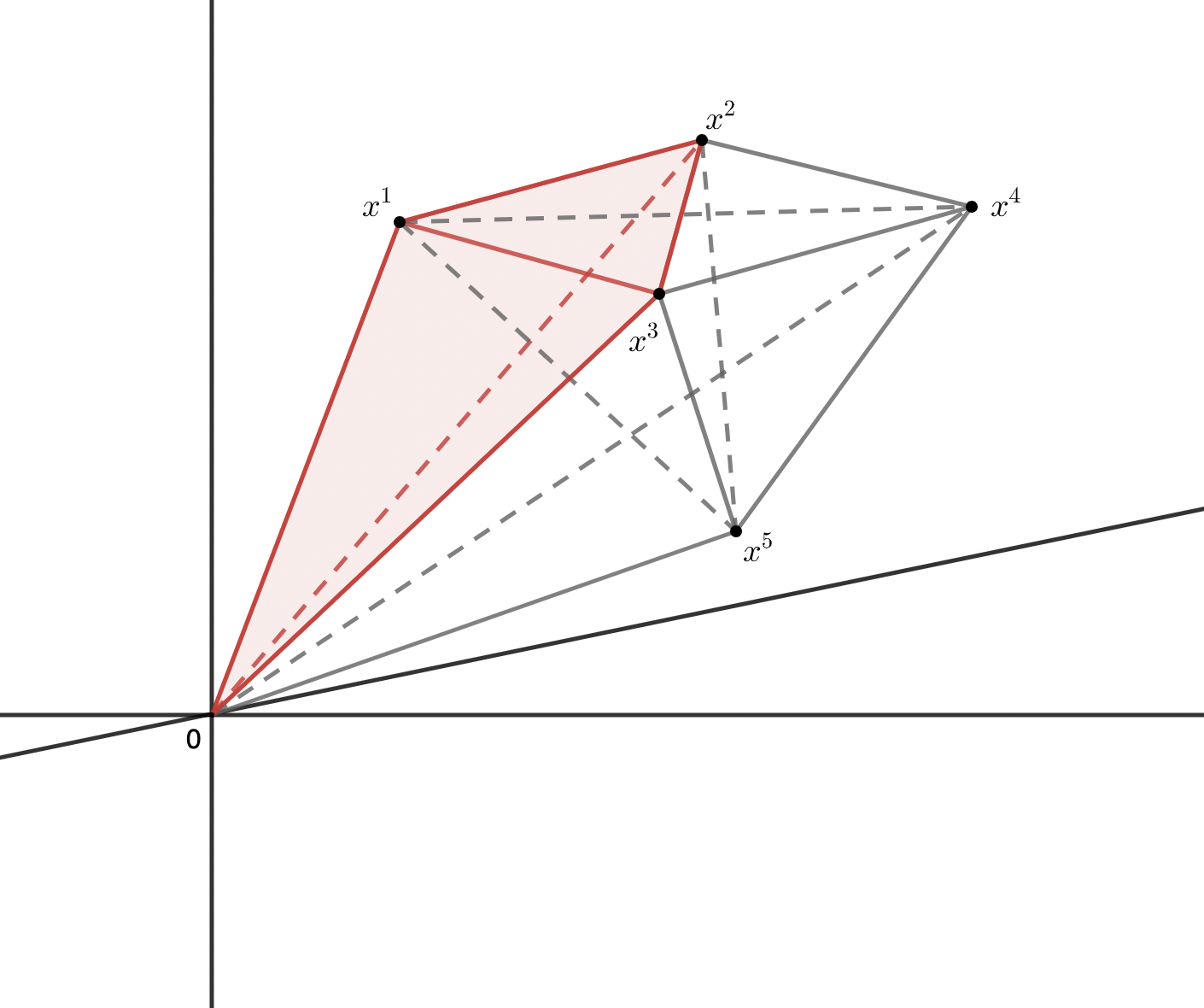}
\caption{5-point configuration $x\in \left(\R^3\right)^5$.}
\label{fig:universe}
\end{figure}

When $k=d$, the space of volume types is simply $\mathcal{V}_{d,d}=\R$, which we may equip with the Lebesgue measure.  In the case $k=d=3$, Greenleaf, Iosevich, and Mourgoglou \cite{GIM15} proved that $\mathcal{V}_{3,3}(E)$ has positive measure if $\dim E>13/5$.  This threshold was later improved and generalized to higher dimension by Greenleaf, Iosevich, and Taylor \cite{GIT20} who considered the case $k=d$ for any $d\geq 3$ and proved $\mathcal{V}_{d,d}(E)$ has positive measure if $\dim E>d-1+\frac{1}{d}$.  Notice in the case $d=3$ this improves the $13/5$ threshold to $7/3$.  When $k$ is large, the problem is overdetermined and hence one needs to define an appropriate measure on the space of volume types.  The second listed author \cite{M20} proved that $\mathcal{V}_{k+1,2}$ may be identified with a space of dimension $2k-1$ (note that this is consistent with our previously described heuristic, since the space of $(k+1)$-point configurations has dimension $2k+2$ and the Lie group $\text{SL}_2(\R)$ has dimension 3) and that $\mathcal{V}_{k+1,2}$ has positive measure if $\dim E>2-\frac{1}{2k}$.  The second author also obtained a non-trivial result in the two dimensional problem over finite fields and rings of the form $\mathbb{Z}/p^\ell\mathbb{Z}$ \cite{M19}. \\

Our first goal is to generalize these results to the case where $k,d$ are natural numbers satisfying $k\geq d\geq 2$ but are otherwise arbitrary.  Our heuristic suggests that the dimension of $\mathcal{V}_{k,d}$ should be $d(k-d)+1$.  Our first theorem shows that this is indeed the case.

\begin{thm}
\label{thm: manifold}
The set $\mathcal{V}_{k,d}$ is an embedded submanifold in $\R^{\binom{k}{d}}$ of dimension $d(k-d)+1$.
\end{thm}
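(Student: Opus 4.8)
The plan is to exhibit $\mathcal V_{k,d}$ as the image of the \emph{volume type map}
$$\Phi\colon(\R^d)^k\longrightarrow\R^{\binom kd},\qquad \Phi(x)=\bigl\{\det(x^{j_1},\dots,x^{j_d})\bigr\}_{1\le j_1<\cdots<j_d\le k},$$
and to study it through the diagonal action of $\mathrm{SL}_d(\R)$ on $(\R^d)^k$ given by $g\cdot x=(gx^1,\dots,gx^k)$. Since $\det(gx^{j_1},\dots,gx^{j_d})=\det(g)\det(x^{j_1},\dots,x^{j_d})$, the map $\Phi$ is constant on $\mathrm{SL}_d(\R)$-orbits, and the claimed dimension is exactly $\dim(\R^d)^k-\dim\mathrm{SL}_d(\R)=dk-(d^2-1)=d(k-d)+1$, so the theorem asserts that the orbit heuristic is sharp. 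Concretely I would prove that on the open dense set $U\subset(\R^d)^k$ of configurations spanning $\R^d$ the map $\Phi$ has constant rank $d(k-d)+1$, conclude via the constant rank theorem that $\Phi(U)$ is an embedded submanifold of that dimension, and then note that every remaining configuration has all its $d\times d$ minors equal to $0$, i.e. maps to the single point $0$.

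First I would identify the fibers of $\Phi|_U$ with the $\mathrm{SL}_d(\R)$-orbits, which are $(d^2-1)$-dimensional since the stabilizer of a spanning configuration is trivial. One inclusion is invariance. For the converse, suppose $\Phi(x)=\Phi(y)$ with $x,y\in U$; after relabeling assume $x^1,\dots,x^d$ are independent, so $A=[x^1|\cdots|x^d]$ and, since $\det(y^1,\dots,y^d)=\det(x^1,\dots,x^d)\ne0$, also $B=[y^1|\cdots|y^d]$ lie in $\mathrm{GL}_d(\R)$ with equal determinant; put $g=BA^{-1}\in\mathrm{SL}_d(\R)$, so $gx^i=y^i$ for $i\le d$. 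By Cramer's rule the coordinates of $x^j$ in the basis $x^1,\dots,x^d$ are the ratios $\det(x^1,\dots,x^{i-1},x^j,x^{i+1},\dots,x^d)/\det(x^1,\dots,x^d)$, hence are determined by $\Phi(x)=\Phi(y)$ alone; so $x^j$ and $y^j$ have the same coordinates in their respective bases, and therefore $gx^j=y^j$ for all $j$, i.e. $y=g\cdot x$. In particular $\ker d\Phi_x$ contains the $(d^2-1)$-dimensional tangent space to the orbit, so $\operatorname{rank}d\Phi_x\le d(k-d)+1$ everywhere on $U$.

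For the matching lower bound — the heart of the matter — I would use invariance to pass to a normal form. Every configuration in $U$ whose first $d$ points span is $\mathrm{SL}_d(\R)$-equivalent to exactly one of the shape $x^1=te_1,\ x^2=e_2,\dots,x^d=e_d$ with $t\ne0$ and $x^{d+1},\dots,x^k$ arbitrary; these form a slice $S$, of dimension $d(k-d)+1$, for the action restricted to this part of $U$. On $S$ the relevant minors are immediate: the $\{1,\dots,d\}$ minor equals $t$, and for $i\in\{1,\dots,d\}$, $m\in\{d+1,\dots,k\}$ the minor on columns $(\{1,\dots,d\}\setminus\{i\})\cup\{m\}$ equals $\pm x^m_1$ when $i=1$ and $\pm t\,x^m_i$ when $i\ge2$. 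Since $t\ne0$ on $S$, the differentials of these $d(k-d)+1$ coordinate functions of $\Phi$ already span $T^*_sS$ at every $s$, so $\Phi|_S$ is an immersion of rank $\dim S=d(k-d)+1$; combined with $\mathrm{SL}_d(\R)$-invariance and with running this argument over all $d$-subsets of $\{1,\dots,k\}$ (which exhausts $U$), this shows $\Phi|_U$ has constant rank $d(k-d)+1$, so $\Phi(U)$ is locally an embedded $(d(k-d)+1)$-submanifold of $\R^{\binom kd}$. To make this global I would observe that the $\mathrm{SL}_d(\R)$-action on $U$ is proper — again because $g=BA^{-1}$ recovers $g$ continuously from $x$ and $g\cdot x$ — so $U/\mathrm{SL}_d(\R)$ is a manifold and $\Phi$ descends to an injective immersion of it onto $\Phi(U)$; since source and image have the same dimension, invariance of domain makes this a diffeomorphism onto the embedded submanifold $\Phi(U)$.

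The step I expect to be the main obstacle is the constant-rank verification together with the passage from ``locally embedded'' to a genuine global embedded submanifold: one must organize the slice computation and the covering by the different spanning $d$-subsets carefully, and properness of the action is exactly what keeps $\Phi(U)$ from being merely an immersed submanifold. A secondary point is the behaviour at the degenerate locus: all non-spanning configurations map to $0$, and for $k\ge d+2$ this is precisely the vertex of the affine cone over the Grassmannian $\mathrm{Gr}(d,k)$ in its Pl\"ucker embedding — indeed $\mathcal V_{k,d}$ is exactly that cone — so the manifold statement is to be understood on the locus of nondegenerate configurations (equivalently away from $0$), which is in any case the part entering the measure-theoretic arguments; when $k\le d+1$ one checks instead that $\Phi$ is already surjective onto all of $\R^{\binom kd}$.
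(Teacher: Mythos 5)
Your approach is in the same spirit as the paper's: both pass to a slice for the $\mathrm{SL}_d(\R)$-action on nondegenerate configurations, compute the minors in slice coordinates, and observe that $t$ appears in the first minor while $\pm x^m_1$ and $\pm t\,x^m_i$ appear in the others. The paper works directly with the slice $M=\{(e^1,\dots,e^{d-1},te^d,z^{d+1},\dots,z^k):t\neq 0\}$ and shows that the restriction of the volume-type map to $M$ is a smooth injective immersion with smooth inverse, then invokes the fact (\cite{Lee}, Theorem 5.31) that the image of a smooth embedding is an embedded submanifold. Your route is more roundabout, passing through the quotient $U/\mathrm{SL}_d(\R)$ and the constant rank theorem; the extra structural observations (the orbit space as a manifold, the identification with the affine cone over $\mathrm{Gr}(d,k)$ in Pl\"ucker coordinates, the remark about $k\le d+1$ versus $k\ge d+2$) are nice context but not needed.

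However, there is a gap at the final step. You go from ``$\Phi'$ is an injective immersion of $U/\mathrm{SL}_d(\R)$ onto $\Phi(U)$'' to ``$\Phi(U)$ is an embedded submanifold'' by appealing to invariance of domain and to properness of the group action, but neither applies: invariance of domain requires source and target of the \emph{same} dimension, and $\dim(U/\mathrm{SL}_d(\R))=d(k-d)+1<\binom kd$ in general, while the image $\Phi(U)$ is not a priori known to be a manifold, which is precisely what is being proved. Properness of the $\mathrm{SL}_d(\R)$-action makes the quotient Hausdorff, but it does not make the induced map $\Phi'$ a proper map, and an injective immersion that is not proper need not be a topological embedding (the figure-eight is the standard counterexample). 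The fix is already implicit in your slice computation: since on $S$ the minors equal $t$, $\pm x^m_1$, and $\pm t\,x^m_i$ with $t\ne 0$, the slice coordinates are recovered from the minor vector by rational functions, so the inverse map is continuous (indeed smooth). This is exactly what the paper records by saying ``$\Phi$ and $\Phi^{-1}$ are smooth,'' and it is what upgrades the injective immersion to a genuine smooth embedding. If you replace the invariance-of-domain sentence by this explicit inversion, the argument closes.
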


This will be proved in Section 2.  It follows that $\mathcal{V}_{k,d}$ is equipped with $(d(k-d)+1)$-dimensional Lebesgue measure, which we will denote by $\mathcal{L}_{d(k-d)+1}$.  It also follows that if $E$ is compact, $\mathcal{V}_{k,d}(E)$ is a compact subset of $\mathcal{V}_{k,d}$.  \\

With this result, we are now ready to state our first main theorem.

\begin{thm}
\label{thm: MT1}
Let $k\geq d\geq 2$ and let $E\subset\R^d$ be a compact set with Hausdorff dimension greater than $d-\frac{d-1}{2k-d}$.  Then, $\mathcal{L}_{d(k-d)+1}(\mathcal{V}_{k,d}(E))>0$.
\end{thm}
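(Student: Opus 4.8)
The plan is to run the by-now-standard pushforward-measure strategy for Falconer-type problems, specializing to the volume-type map the framework behind \cite{GIM15, GIT20, M20}. Fix $s$ with $d-\frac{d-1}{2k-d}<s<\dim E$ and, by Frostman's lemma, a probability measure $\mu$ supported on $E$ with $\mu(B(x,r))\le r^s$ for all $x,r$; a standard computation then yields $\int_{|\xi|\le R}|\widehat{\mu}(\xi)|^2\,d\xi\lesssim R^{d-s}$ for $R\ge 1$. By (the proof of) Theorem~\ref{thm: manifold}, near its generic points $\mathcal{V}_{k,d}$ is parametrized by the $m=d(k-d)+1$ coordinates $\det(x^1,\dots,x^d)$ and $\det(x^1,\dots,\widehat{x^i},\dots,x^d,x^j)$ with $1\le i\le d$ and $d<j\le k$, $\widehat{x^i}$ meaning that $x^i$ is deleted; write $\Phi\colon(\R^d)^k\to\R^m$ for the volume-type map read in these coordinates. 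Since $s>d-1$, $\mu$ gives zero mass to every hyperplane, so $\mu^k$-almost every configuration has $x^1,\dots,x^d$ linearly independent, and on the set $G_\eta$ of configurations with $|\det(x^1,\dots,x^d)|\ge\eta$ — which carries at least half the mass of $\mu^k$ once $\eta$ is chosen small enough — $\Phi$ is a submersion exhibiting $\mathcal{V}_{k,d}$ locally. As a diffeomorphism carries Lebesgue-null sets to Lebesgue-null sets in both directions, it suffices to prove that the compactly supported measure $\nu:=\Phi_{*}\!\left(\mu^k|_{G_\eta}\right)$ on $\R^m$ is absolutely continuous: its support is then a positive-measure subset of the image of the compact set $\mathcal{V}_{k,d}(E)$ under the chart, and $\mathcal{L}_{d(k-d)+1}\!\left(\mathcal{V}_{k,d}(E)\right)>0$ follows.

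Since $\nu$ is compactly supported, $\nu\in L^2(\R^m)$ — hence $\nu\ll\mathcal{L}_m$ — as soon as $\int_{\R^m}|\widehat{\nu}(\tau)|^2\,d\tau<\infty$. The structural point, which is exactly the content of the chart used in Theorem~\ref{thm: manifold}, is that once the base points $x^1,\dots,x^d$ are fixed, the coordinate $\det(x^1,\dots,\widehat{x^i},\dots,x^d,x^j)$ is \emph{linear} in $x^j$: writing $w_i(x^1,\dots,x^d)$ for the generalized cross product of the base points with $x^i$ omitted, and $\tau=(t,(t_i^j))$, one gets
\[
\widehat{\nu}(\tau)=\int_{G_\eta}e^{-it\det(x^1,\dots,x^d)}\,\prod_{j=d+1}^{k}\widehat{\mu}\!\left(\,\sum_{i=1}^{d}t_i^j\,w_i(x^1,\dots,x^d)\right)d\mu(x^1)\cdots d\mu(x^d).
\]
Expanding $|\widehat{\nu}(\tau)|^2$ as a double integral over two base configurations $x$ and $y$, inserting a smooth frequency cutoff at scale $R$ (to be removed by a dyadic argument), performing the elementary integral in $t$ (which localizes to $|\det(x^1,\dots,x^d)-\det(y^1,\dots,y^d)|\lesssim R^{-1}$), and in each block $t^j$ the linear change of variables $u=\sum_i t_i^j\,w_i(x^1,\dots,x^d)$ — whose Jacobian is $|\det(x^1,\dots,x^d)|^{d-1}$ — reduces the problem to a uniform-in-$R$ bound for an expression of the schematic form
\[
R\int_{G_\eta\times G_\eta}\mathbf{1}_{\left\{|\det x-\det y|\lesssim R^{-1}\right\}}\,|\det x|^{-(d-1)(k-d)}\prod_{j=d+1}^{k}\left(\,\int_{|u|\lesssim R}\widehat{\mu}(u)\,\overline{\widehat{\mu}(N_{x,y}u)}\,du\right)d\mu^d(x)\,d\mu^d(y),
\]
where $\det x:=\det(x^1,\dots,x^d)$ and $N_{x,y}$ is the linear map carrying the frame $\left(w_1(x),\dots,w_d(x)\right)$ to $\left(w_1(y),\dots,w_d(y)\right)$.

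Two features of this reduction guide the estimate. First, the base-determinant localization, integrated over the base configurations, is — heuristically — the $k=d$ quantity $R\cdot(\lambda\times\lambda)\!\left(\{|a-b|\lesssim R^{-1}\}\right)$, where $\lambda$ is the pushforward of $\mu^d$ under $(x^1,\dots,x^d)\mapsto\det(x^1,\dots,x^d)$; this is $O(1)$ because our hypothesis $s>d-\frac{d-1}{2k-d}\ge d-1+\frac1d$ is stronger than the hypothesis of \cite{GIT20} and forces $\lambda\in L^2(\R)$. Thus Theorem~\ref{thm: MT1} contains the $k=d$ result as a special case, and the genuinely new ingredient is the control of the extra-point overlap integrals $\int\widehat{\mu}(u)\,\overline{\widehat{\mu}(N_{x,y}u)}\,du$. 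Second, a crude Cauchy--Schwarz on these overlaps, using only $\int_{|\xi|\le R}|\widehat{\mu}|^2\lesssim R^{d-s}$, is lossy by precisely the gap between the trivial exponent $s=d$ and the desired threshold, as it discards the cancellation present whenever $N_{x,y}$ is far from the identity. One must therefore \emph{not} decouple the $u$-integrals from the base integrals; instead one integrates the base configurations together with the $u^j$, harvesting decay in $R$ from the oscillation of $\widehat{\mu}\!\left(\sum_i t_i^j\,w_i(x^1,\dots,x^d)\right)$ as the base points vary. This is carried out via sharp $L^2$-slicing and spherical-average estimates for $\widehat{\mu}$ (of Mattila type, with the Frostman bound covering the small-frequency range), after splitting the base configurations into a near-degenerate part — handled using $\eta$ and the slab estimate $\mu(H_\delta)\lesssim\delta^{s-(d-1)}$ for $\delta$-neighborhoods $H_\delta$ of hyperplanes — and a generic part; balancing small- and large-frequency contributions over the $k-d$ extra points yields a convergent dyadic series in $R$ exactly when $s>d-\frac{d-1}{2k-d}$.

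The main obstacle is this last, coupled estimate. One must keep under simultaneous control the singular Jacobian weight $|\det(x^1,\dots,x^d)|^{-(d-1)(k-d)}$, which blows up as a base frame degenerates; the near-diagonal behaviour of the overlap integrals, which tend to the infinite $s$-energy of $\mu$ as the two base frames coalesce; and the joint oscillation in the base and frequency variables, which is where \emph{all} of the gain beyond the trivial exponent $s=d$ resides. Making this rigorous means organizing the computation as a Fubini argument in which the base variables are integrated in an order that exposes the cancellation, using slicing estimates uniform over admissible base frames, and — to keep every integral finite a priori — replacing $\mu$ by a mollification $\mu*\rho_{1/R}$ with all bounds arranged to be independent of the mollification scale. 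Tracking the exponents faithfully through the $(k-d)$-fold product is what pins the threshold down to $d-\frac{d-1}{2k-d}$.
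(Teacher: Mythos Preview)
Your setup is reasonable and tracks the paper's framework up to a point: the chart coordinates, the linearity of the extra-point determinants in $x^j$, the Jacobian $|\det x|^{d-1}$ in the frequency change of variables, and the appearance of the transfer map $N_{x,y}$ are all correct. But the proposal stops being a proof precisely at the step that carries all of the content. You correctly observe that a crude Cauchy--Schwarz on the overlap integrals $\int\widehat\mu(u)\overline{\widehat\mu(N_{x,y}u)}\,du$ loses everything, and then assert that the loss is recovered ``via sharp $L^2$-slicing and spherical-average estimates for $\widehat\mu$ (of Mattila type)'' together with a near-degenerate/generic split and a balancing argument that ``yields a convergent dyadic series in $R$ exactly when $s>d-\tfrac{d-1}{2k-d}$.'' None of that is carried out: you do not state which spherical-average estimate is being invoked, you do not explain how integrating the base points against the oscillatory factor $\widehat\mu\bigl(\sum_i t_i^j w_i(x)\bigr)$ produces any specific gain, and you do not exhibit the arithmetic that pins the exponent to $d-\tfrac{d-1}{2k-d}$. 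The difficulty you yourself flag --- keeping the singular weight, the near-diagonal blow-up of the overlaps, and the joint oscillation under simultaneous control --- is exactly the theorem, and the proposal does not resolve it.

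For comparison, the paper does not attempt to estimate those coupled overlaps directly. It Littlewood--Paley decomposes $\mu$, uses the $\text{SL}_d(\R)$ action (Lemma~\ref{lem: GroupAction}) to change variables so that the two base frames are identified rather than compared through a transfer map $N_{x,y}$, and then peels off the $k-d$ extra points by the pointwise bound $\|\mu_j\|_{L^\infty}\lesssim 2^{j(d-s)}$. What remains is the $k=d$ quantity $\langle\mathcal R_t^\e\mu_{j_1},\mu_{j_2}\otimes\cdots\otimes\mu_{j_d}\rangle$, and this is bounded not by Mattila-type spherical averages but by the Sobolev mapping property of the determinant Radon transform (Theorem~\ref{thm: SobolevBound}), which gains $\tfrac{d-1}{2}$ derivatives and yields $\langle\mathcal R_t^\e\mu_j,\mu_j^{\otimes(d-1)}\rangle\lesssim 2^{\frac{j}{2}(1-s+(d-s)(d-1))}$. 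Summing the resulting geometric series is what produces the threshold. Your route may be salvageable, but as written it is a description of a strategy rather than a proof, and the missing ingredient --- an actual estimate replacing the lossy Cauchy--Schwarz --- is not supplied.
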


We shall remark here that our decision to work with signed volume, rather than unsigned volume, is an arbitrary one.  One can immediately deduce an unsigned version of Theorem \ref{thm: MT1} by decomposing the set $\mathcal{V}_{k,d}(E)$ into $2^{\binom{k}{d}}$ pieces according to the sign of each component and applying the pigeonhole principle.  We also note that in the case $k=d$ our threshold is the same as the one in \cite{GIT20}.  The general case is proved by reducing to the $k=d$ case, so a better exponent in that case would yield better general results.  \\

Another classic object of study in the distance problem is chains of distances determined by a set.  A configuration $x\in(\R^d)^{k}$ determines a $k-1$ chain of distances $|x^1-x^2|,|x^2-x^3|,..., |x^{k-1}-x^k|$.  Bennett, Iosevich, and Taylor \cite{BIT16} proved that if $\dim E>\frac{d+1}{2}$ then the set of distance chains determined by $E$ has positive measure.  This result was later generalized by Iosevich and Taylor \cite{IT19} to apply to all trees.  \\

Our other main theorem will pertain to chains of volumes.  Since a volume is determined by $d$ points rather than $2$, we will consider chains in the sense of hypergraphs.  Recall an $r$-regular hypergraph is a set of vertices and hyperedges, where each hyperedge connects $r$ vertices (so, in particular, a $2$-regular hypergraph is just a graph).  A chain in a hypergraph is a seqeunce of vertices where each shares some hyperedge with the next.\\

Given a $d$-uniform hypergraph on vertices $\{1,...,k\}$ and a configuration $x\in (\R^d)^k$, we may consider volumes determined by points $x^{j_1},..., x^{j_d}$ such that $(j_1,...,j_d)$ forms a hyperedge.  In this framework, Theorem \ref{thm: MT1} gives a result in the case where the hypergraph is complete.  Our methods also allow us to obtain a result in the case of a chain.  This is our next theorem.

\begin{thm}
Let $E\subset\R^d$ be compact, and let 

\[
\mathcal{C}_{k,d}=\{\{\det(x^j,x^{j+1},\cdots,x^{j+d-1})\}_{1\leq j\leq k+1-d}:x^1,...,x^k\in E\}.
\]

If $\dim E>d-1+\frac{1}{d}$, then $\mathcal{L}_{(k+1-d)}(\mathcal{C}_{k,d}(E))$.

\label{thm: MT3}
\end{thm}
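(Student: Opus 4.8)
The plan is to adapt the transfer-operator method developed for distance chains by Bennett, Iosevich, and Taylor \cite{BIT16} and for distance trees by Iosevich and Taylor \cite{IT19}, with the single-distance (Falconer) input replaced by the single-volume estimate underlying \cite{GIT20}. This is why the threshold is exactly $d-1+\frac{1}{d}$---it is the \cite{GIT20} threshold---and, as in the distance case, why it does not deteriorate with $k$: the chain of hyperedges is a path, in that consecutive hyperedges $\{j,\dots,j+d-1\}$ and $\{j+1,\dots,j+d\}$ overlap in $d-1$ vertices, so every vertex of $\{1,\dots,k\}$ lies in only $O_d(1)$ of them. Concretely, fix $s\in\bigl(d-1+\frac{1}{d},\,\dim E\bigr]$ and a Frostman probability measure $\mu$ supported on $E$ with $\mu(B(x,r))\lesssim r^{s}$, and let $\nu=\Phi_{*}\bigl(\mu^{\otimes k}\bigr)$ be the push-forward of $\mu^{\otimes k}$ under the volume-chain map $\Phi(x^1,\dots,x^k)=\bigl(\det(x^1,\dots,x^d),\dots,\det(x^{k+1-d},\dots,x^k)\bigr)$. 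Since $\mathrm{supp}\,\nu\subseteq\mathcal C_{k,d}(E)$, which is compact, and $\nu$ has total mass $1$, it suffices to show that $\nu$ is absolutely continuous with respect to $\mathcal L_{k+1-d}$ with density in $L^2$; equivalently, that $\|\nu*\psi_\delta\|_{L^2(\R^{k+1-d})}$ stays bounded as $\delta\to 0$ for a smooth approximate identity $\psi_\delta$. Granting this, $\mathcal L_{k+1-d}(\mathcal C_{k,d}(E))>0$.

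The feature that drives the argument is that $\det(x^j,\dots,x^{j+d-1})$ is \emph{linear} in its last entry: writing $N(y^1,\dots,y^{d-1})$ for the generalized cross product, one has $\det(y^1,\dots,y^{d-1},y)=\langle y,\,N(y^1,\dots,y^{d-1})\rangle$, whose level sets are affine hyperplanes with normal $N/|N|$. After mollifying and expanding the square, $\|\nu*\psi_\delta\|_{L^2}^2$ becomes a multilinear integral over $(\R^d)^{2k}$ whose integrand is a product, one factor per hyperedge, of approximate-identity weights comparing $\det(x^j,\dots,x^{j+d-1})$ with $\det(\widetilde{x}^{j},\dots,\widetilde{x}^{j+d-1})$. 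Integrating out $x^k$ and $\widetilde{x}^{k}$---each of which lies in only the last hyperedge, and linearly---collapses that factor to a pairing of dilated one-dimensional push-forwards $\langle\,\cdot\,,\,N(x^{k+1-d},\dots,x^{k-1})\rangle_{*}\mu$, which by Cauchy--Schwarz is at most the product of their $L^2(\R)$ norms. Iterating---peeling off the free vertices $x^k,x^{k-1},\dots,x^d$ in turn, each being the last vertex of exactly one remaining hyperedge, until only $x^1,\dots,x^{d-1}$ and their copies survive and integrate to total mass $1$---is organized cleanly by attaching to each hyperedge an operator on $L^2(\mu^{\otimes(d-1)})$, acting on functions of the $d-1$ vertices shared with the previous hyperedge, and composing them in the manner of \cite{IT19}. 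The analytic input consumed at each step is the $L^2$-averaged volume estimate: $s>d-1+\frac{1}{d}$ implies
\[
\int_{(\R^d)^{d-1}}\Bigl\|\,\langle\,\cdot\,,\,N(y^1,\dots,y^{d-1})\rangle_{*}\mu\,\Bigr\|_{L^2(\R)}^{2}\,d\mu^{\otimes(d-1)}(y^1,\dots,y^{d-1})<\infty.
\]
This is exactly the estimate at the heart of \cite{GIT20}; for $k=d$ there is a single hyperedge, no peeling, and the displayed bound already gives $\nu$ an $L^2$ density, recovering the $k=d$ case of that paper. Its proof is the Fourier-analytic core of the problem---restriction-type bounds for $\widehat\mu$ together with control of the distribution of the normals $N(y^1,\dots,y^{d-1})$ and of the weight $|N|^{-1}$ near the degenerate locus, where the hypothesis $s>d-1$ prevents $\mu^{\otimes(d-1)}$ from concentrating.

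The main obstacle is the bookkeeping forced by the shared vertices. When $x^i$ is integrated out in order to process the hyperedge whose last vertex it is, it also occurs---nonlinearly, inside the normal vectors---as an anchor of the $d-1$ hyperedges with free vertices $x^{i+1},\dots,x^{i+d-1}$, all of which have already been processed; these leave weight factors $\bigl\|\langle\,\cdot\,,\,N(\dots,x^i,\dots)\rangle_{*}\mu\bigr\|_{L^2(\R)}$ that block the naive use of linearity in $x^i$. One must show that these weights carry enough integrability---which is precisely what $s>d-1+\frac{1}{d}$ furnishes---for the multilinear integral to still close, and, crucially, that the resulting constant does not depend on $k$. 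It is here that the path structure is essential: each vertex is entangled with only $O_d(1)$ already-processed hyperedges, so the loss incurred per vertex is a fixed power independent of $k$, consistent with the $k$-independence of the threshold $d-1+\frac{1}{d}$.
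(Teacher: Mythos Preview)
Your outline is essentially the paper's: push forward $\mu^{\otimes k}$ under the chain map, bound the $L^2$ norm of the mollified density, peel off the last vertex at each step, and feed in the single-volume ($k=d$) bound of \cite{GIT20} as the sole analytic input. Where you and the paper diverge is in how the peeling is organized, and the paper's version dissolves the obstacle you flag in your last paragraph. The paper works with the scalar quantity
\[
J^\varepsilon_{t,k}=\int\prod_{j=1}^{k+1-d}\varphi^\varepsilon\bigl(\det(x^j,\dots,x^{j+d-1})-t_j\bigr)\,d\mu^k(x)
\]
and, after integrating out $x^k$ alone to produce the approximate Radon transform $\mathcal{R}^\varepsilon_{t_{k+1-d}}\mu$ as a function of $(x^{k-d+1},\dots,x^{k-1})$, applies one Cauchy--Schwarz in \emph{all} of $x^1,\dots,x^{k-1}$ against $d\mu^{k-1}$, separating that transform from the entire remaining product. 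This yields $J^\varepsilon_{t,k}\lesssim (J^\varepsilon_{\tilde t,k-1})^{1/2}\sum_j\|\mathcal{R}^\varepsilon_{t_{k+1-d}}\mu_j\|_{L^2(\mu^{d-1})}$; the second factor is exactly the $k=d$ input (finite once $s>d-1+\tfrac1d$), and the first is the same object with $k$ replaced by $k-1$, so one iterates down to $k=d$. Nothing is carried between steps except the scalar $J_{k-1}$ itself---no weight depending on anchor variables survives---so the nonlinear entanglement you worry about never materializes. Your transfer-operator formulation in the style of \cite{IT19}, which does carry such weights forward, may also close, but you leave precisely that step open; the paper's global Cauchy--Schwarz simply sidesteps it.
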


Here, we pause to make a couple remarks.  First, note that if our set $E$ is contained in a hyperplane through the origin it cannot determine any non-zero volume, so the optimal threshold cannot be smaller than $d-1$.  Second, it is interesting to note that the threshold in Theorem \ref{thm: MT3} does not depend on $k$ whereas the threshold in Theorem \ref{thm: MT1} tends to $d$ as $k\to\infty$.  Our final theorem shows that this cannot be avoided.

\begin{thm}[Sharpness]
\label{thm:sharpness}
For any $k\geq d\geq 2$ and any 

\[
s_{k,d}<d-\frac{d^2(d-1)}{d(k-1)+1},
\]

there exists a compact set $E_{k,d}\subset \R^d$ such that $\dim E_{k,d}>s_{k,d}$ and $\mathcal{L}_{d(k-d)+1}(\mathcal{V}_{k,d}(E_k)) =0$.
\end{thm}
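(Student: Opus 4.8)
The plan is to take $E_{k,d}$ to be a Falconer-type Cantor set, built from a suitably lacunary sequence of scales and grids, so that every $d$-fold determinant formed from a configuration drawn from $E_{k,d}$ is forced — at each scale — into a fixed lattice. Fix $s$ with $s_{k,d}<s<d-\frac{d^2(d-1)}{d(k-1)+1}$, and note that this upper bound equals $\frac{d\,(d(k-d)+1)}{d(k-1)+1}$. Choose a sufficiently rapidly increasing sequence of integers $q_1\mid q_2\mid\cdots$ together with scales $\ell_j\approx q_j^{-d/s}$, put $\Lambda_j=\tfrac1{q_j}\Z^d\cap[0,1]^d$, and set
\[
E=\bigcap_{j\ge1}\big\{x\in\R^d:\operatorname{dist}(x,\Lambda_j)\le\ell_j\big\}.
\]
This is the standard construction of a compact set of Hausdorff dimension exactly $s$: the box-counting dimension of the $j$-th approximation at scale $\ell_j$ is $\approx d\log q_j/\log(1/\ell_j)=s$, and a mass distribution argument of the kind in Falconer's original distance-set example (with the growth rate of the $q_j$ tuned appropriately) upgrades this to $\dim_H E=s$.

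The core of the argument is the quantisation of volumes together with a bound on the number of volume \emph{types} of lattice configurations. Let $N(q)$ denote the number of distinct volume types of $k$-point configurations with all points in $\{0,1,\dots,q\}^d$; the key lemma is that $N(q)\lesssim_{\e}q^{\,d(k-1)+1+\e}$ for every $\e>0$. Granting this, the theorem follows quickly. Given $x\in E^k$, approximate it at scale $\ell_j$ by $\tilde x$ with every $\tilde x^i\in\tfrac1{q_j}\Z^d$; since the volume-type map $V\colon(\R^d)^k\to\R^{\binom{k}{d}}$ is a polynomial map and $E$ is bounded, $|V(x)-V(\tilde x)|\lesssim\ell_j$, while $V(\tilde x)$ lies in $q_j^{-d}\Z^{\binom{k}{d}}$ and ranges over at most $N(Cq_j)$ values. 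Hence $\mathcal V_{k,d}(E)$ is covered by $\lesssim N(Cq_j)$ balls of radius $C\ell_j$; by Theorem \ref{thm: manifold}, $\mathcal V_{k,d}$ is a smooth embedded submanifold of dimension $m:=d(k-d)+1$, so each such ball meets it in $m$-dimensional measure $\lesssim\ell_j^{\,m}$, and therefore
\[
\mathcal L_{m}\big(\mathcal V_{k,d}(E)\big)\;\lesssim\;N(Cq_j)\,\ell_j^{\,m}\;\lesssim_{\e}\;q_j^{\,d(k-1)+1+\e-\frac{d}{s}m}.
\]
Since $s<\frac{dm}{d(k-1)+1}$, the exponent is negative once $\e$ is small enough, and letting $j\to\infty$ gives $\mathcal L_{m}(\mathcal V_{k,d}(E))=0$.

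The main obstacle is the counting lemma. The point is that two configurations with the same volume type differ, on the generic (spanning) locus, by the diagonal action of $\mathrm{SL}_d(\R)$, which preserves $\Z^d$; so $N(q)$ is, up to the non-spanning configurations, the number of $\mathrm{SL}_d(\Z)$-orbits of $k$-tuples of lattice points in $\{0,\dots,q\}^d$ — equivalently, the number of integer points of the cone $\mathcal V_{k,d}$ inside a ball of radius $\asymp q^d$. I would attack this by descent on the dimension: using $\mathrm{SL}_d(\Z)$, reduce $x^1$ to $(g,0,\dots,0)$ where $g$ is the gcd of its entries (contributing $\lesssim q$ possibilities); the stabiliser of this vector is an extension of $\mathrm{SL}_{d-1}(\Z)$ by $\Z^{d-1}$, and quotienting the remaining $k-1$ points by it breaks the count into a $(d-1)$-dimensional volume-type count for $k-1$ points, a tuple of $k-1$ ``height'' coordinates taken modulo a rank-$(d-1)$ lattice (contributing $\lesssim q^{k-d}$), and the factor $g$; iterating down to the base case $d=1$, where the volume type is just the list of $k-d+1$ points, assembles a bound. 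The genuinely delicate part is controlling how the coordinate box distorts under the normalising transformations at each step, so that the exponents collected over the $d-1$ descents total exactly $d(k-1)+1$; one must argue that, after a cheap partial reduction, the relevant configurations have $x^1$ of controlled size, and dispose of the non-generic and highly skew configurations by a separate, cruder estimate.
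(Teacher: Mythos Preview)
Your overall architecture matches the paper's: build $E$ as an intersection of thickened grids at scale $\ell_j\approx q_j^{-d/s}$, bound the number of volume types of the underlying lattice by $\approx q^{d(k-1)+1}$, and cover $\mathcal V_{k,d}(E)$ by that many balls of radius $\approx\ell_j$ on the $(d(k-d)+1)$-dimensional manifold. Your reduction of the problem to the bound $N(q)\lesssim q^{d(k-1)+1}$, and the Lebesgue-measure computation from that bound, are both correct and are exactly what the paper does.

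The substantive difference is in how the counting bound is obtained. You keep the Cartesian grid $\frac{1}{q}\Z^d\cap[0,1]^d$ and propose to count $\mathrm{SL}_d(\Z)$-orbits by a Hermite-type descent on $x^1$. The paper instead first pushes the grid through a spherical-coordinate map $\psi$, producing a ``polar'' lattice with $\approx q$ radii and $\approx q^{d-1}$ directions, and then uses only the $SO(d)$-invariance of the volume type: rotate any configuration so that $x^1$ lies on the positive first axis, whence $x^1$ contributes $\approx q$ choices (its radius) and each remaining $x^i$ contributes $\approx q^d$, giving $q^{d(k-1)+1}$ immediately. The spherical map is bi-Lipschitz on the relevant box, so Hausdorff dimension is unchanged. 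The entire point of passing to the spherical lattice is that the normalising transformation is an \emph{isometry}, so there is no box distortion to track; in $d=2$ this is completely transparent, since rotating the angular grid by a grid angle merely translates it into a grid of the same spacing.

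That distortion issue is precisely where your proposal has a genuine gap. Reducing a vector $x^1\in\{0,\dots,q\}^d$ to $(\gcd,0,\dots,0)$ via $\mathrm{SL}_d(\Z)$ may require a matrix with entries of size up to order $q^{d-1}$, which throws the remaining $x^i$ far outside the box of side $q$; the inductive hypothesis then does not apply, and controlling the skew configurations at each of the $d-1$ descent steps so that the accumulated exponent is exactly $d(k-1)+1$ is real work that you have only named, not carried out. The paper's spherical trick is exactly the device that circumvents this, replacing a unimodular reduction (which can stretch) by a rotation (which cannot). If you insist on the Cartesian lattice you will likely end up reinventing an equivalent mechanism, so it is worth knowing the shortcut.
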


\section{Setting up the group action framework}

We start by examining the relationship between volume types and the action of $\text{SL}_d(\R)$ on the space of configurations.  Generically, the property of two configurations having the same volume type is equivalent to those configurations lying in the same orbit of this action.  However, this equivalence breaks down for configurations which do not span $\R^d$.  This leads to the following definition.

\begin{dfn}
A configuration $x\in(\R^d)^k$ is called \textbf{degenerate} if $\{x^1,\cdots,x^d\}$ is linearly dependent, and \textbf{non-degenerate} otherwise.
\end{dfn}

We remark that we could broaden this notion of non-degeneracy to include configurations where any $d$ points span $\R^d$, not just the first $d$ points.  However, in either case the set of degenerate configurations are negligible so we have chosen this definition to simplify our proofs and notation. \\

With our definition in place, we have the following lemma.

\begin{lem}
\label{lem: GroupAction}
Let $x,y\in (\R^d)^k$ be non-degenerate.  Then $x$ and $y$ have the same volume type if and only if there exists a unique $g\in\text{SL}_d(\R)$ such that $y=gx$ (i.e., for each $j$ we have $y^j=gx^j$).
\end{lem}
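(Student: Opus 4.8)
The plan is to exploit the fact that, since $x$ is non-degenerate, the first $d$ points $x^1,\dots,x^d$ form a basis of $\R^d$, and similarly for $y^1,\dots,y^d$. First I would observe that the volume type records in particular the $d$ numbers $\det(x^1,\dots,\widehat{x^i},\dots,x^{d+1})$ (with one point omitted), and more to the point it records $\det(x^1,\dots,x^d)\ne 0$ itself. The natural candidate for $g$ is the unique linear map sending the ordered basis $(x^1,\dots,x^d)$ to $(y^1,\dots,y^d)$; the content of the lemma is that (i) this $g$ automatically lies in $\text{SL}_d(\R)$, (ii) it actually sends every later point $x^j$ to $y^j$, and (iii) it is the only element of $\text{SL}_d(\R)$ doing so. The converse direction — that $y=gx$ with $g\in\text{SL}_d(\R)$ forces equal volume types — is immediate from multilinearity of the determinant and the product rule $\det(gx^{j_1},\dots,gx^{j_d})=\det(g)\det(x^{j_1},\dots,x^{j_d})=\det(x^{j_1},\dots,x^{j_d})$.

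For the forward direction, I would first handle uniqueness, which is the easiest piece: if $g_1,g_2\in\text{SL}_d(\R)$ both satisfy $y=g_1x=g_2x$, then $g_1^{-1}g_2$ fixes the basis $x^1,\dots,x^d$ pointwise, hence equals the identity. Next, that $g\in\text{SL}_d(\R)$: expanding $\det(g)\det(x^1,\dots,x^d)=\det(gx^1,\dots,gx^d)=\det(y^1,\dots,y^d)$ and using that $\det(x^1,\dots,x^d)=\det(y^1,\dots,y^d)$ are equal (they are the same component of the common volume type) and nonzero, we get $\det(g)=1$.

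The main step — and the one I expect to be the real obstacle — is showing $gx^j=y^j$ for $j>d$. Write $x^j=\sum_{i=1}^d c_i\, x^i$ in the basis $x^1,\dots,x^d$; by Cramer's rule the coefficients are ratios of determinants, $c_i=\det(x^1,\dots,x^{i-1},x^j,x^{i+1},\dots,x^d)/\det(x^1,\dots,x^d)$, i.e. each $c_i$ is a ratio of two components of the volume type of $x$. Since $x$ and $y$ have the same volume type, the corresponding coefficients for $y^j$ in the basis $y^1,\dots,y^d$ are the same numbers $c_i$, so $y^j=\sum_i c_i\, y^i=\sum_i c_i\, gx^i=g(\sum_i c_i x^i)=gx^j$. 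The one technical point to be careful about is that all the relevant determinants appearing in these Cramer ratios are genuinely components of the volume type (i.e. indexed by strictly increasing tuples $1\le j_1<\dots<j_d\le k$); reordering columns only changes signs, and since both $x$ and $y$ undergo the identical reordering the ratios $c_i$ are unaffected. This completes the forward direction, and together with the converse the proof is done.
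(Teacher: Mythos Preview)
Your proof is correct and follows essentially the same route as the paper: define $g$ as the unique linear map sending the basis $(x^1,\dots,x^d)$ to $(y^1,\dots,y^d)$, read off $\det g=1$ from the shared first component of the volume type, and then use Cramer's rule (equivalently, multilinearity of the determinant) to see that each $x^j$ and $y^j$ have the same coordinates in their respective bases, forcing $gx^j=y^j$. The paper's argument is identical in substance, only differing in presentation by computing the coefficients $a_n$ directly via the expansion $\det(x^1,\dots,x^{n-1},x^{n+1},\dots,x^d,x^i)=\pm a_n D$ rather than naming it as Cramer's rule.
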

\begin{proof}  First, suppose  $x$ and $y$ have the same volume types. Because $x$ and $y$ are non-degenerate, 

\begin{equation*}
D:=\det(x^{1},\cdots,x^{d})=\det(y^{1},\cdots,y^{d})\neq 0. 
\end{equation*}

 Equivalently, the $d\times d$ matrix with columns $x^{1} \cdots x^{d}$ is non-singular, same as $y^{1} \cdots y^{d}$. We denote these matrices by $(x^{1} \cdots x^{d})$ and $(y^{1} \cdots y^{d})$, respectively. Let
\begin{equation*}
    g=(y^{1} \cdots y^{d})(x^{1} \cdots x^{d})^{-1}.
\end{equation*}

This equation means that $(gx^{1} \cdots gx^{d})=(y^{1} \cdots y^{d})$, so $gx^{n}=y^{n}$ for every $1\leq n\leq d$.  Let $i$ be any index, and write
 \begin{equation*}
    x^{i}=\sum_{n=1}^{d}a_nx^{n},~~ y^{i}=\sum_{n=1}^{d}b_ny^{n}.
\end{equation*}

Observe that

\begin{equation*}
 \det(x^{1}, \cdots, x^{d-1}, x^{i})=\det\left(x^1,\cdots,x^{d-1}, \sum_{n=1}^{d}a_nx^{n}\right)=\sum_{n=1}^{d}\det\left(x^1,\cdots,x^{d-1}, a_nx^{n}\right)
\end{equation*}

since the determinant behaves like a linear function on the rows of the matrix. Therefore, 
\begin{equation*}
 \det(x^{1}, \cdots, x^{d-1}, x^{i})=\det\left(x^1,\cdots,x^{d-1}, a_dx^{d}\right)=a_dD.
\end{equation*}
The same conclusion holds for 

\begin{equation*}
 \det(y^{1}, \cdots, y^{d-1}, y^{i})=\det\left(y^1,\cdots,y^{d-1}, \sum_{n=1}^{d}b_ny^{n}\right)=\sum_{n=1}^{d}\det\left(y^1,\cdots,y^{d-1}, b_ny^{n}\right)=b_dD.
\end{equation*}

By assumption $x$ and $y$ have the same volume type so we conclude $a_d=b_d$. An argument considering $\det(x^{1}, \cdots, x^{n-1}, x^{n+1},\cdots,x^d,x^i)$ similarly shows that $a_n=b_n$ for every $1\leq n\leq d$.  Thus,

\begin{equation*}
gx^{i}= g\sum_{n=1}^{d}a_nx^{n}=\sum_{n=1}^{d}a_ngx^{n}=\sum_{n=1}^{d}a_ny^{n}=y^i.
\end{equation*}

Note that $g\in\text{SL}_d(\R)$, since $\det g= \det((y^{1},\cdots,y^{d})(x^{1},\cdots,x^{d})^{-1})=\det(y^{1},\cdots,y^{d})\det(x^{1},\cdots,x^{d})^{-1}=1$.  This proves existence. Uniqueness follows from the fact that the configuration contains a basis, so $g$ is determined by its action on the configuration. The converse follows from the matrix equation 
\begin{equation*}
    g(x^{1},\cdots,x^{d})= (y^{1},\cdots,y^{d})
\end{equation*}
and the fact that $g$ has determinant 1. 

\end{proof}

We conclude this section by proving Theorem \ref{thm: manifold}.  Given manifolds $M$ and $N$, a smooth map $\Phi:M\to N$ is an immersion if the derivative $D\Phi$ has full rank everywhere.  A smooth embedding is an injective immersion which is also a topological embedding, i.e. a homeomorphism from $M$ to $\Phi(M)$.  A thorough treatment can be found in chapter 5 of \cite{Lee}.  In particular, we will use the following theorem.

\begin{thm}[\cite{Lee}, Theorem 5.31]
\label{thm: EmbeddedSubmanifolds}
The image of a smooth embedding is an embedded submanifold.
\end{thm}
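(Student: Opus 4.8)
\emph{Proof proposal.} The plan is to verify the local $k$-slice condition for $\Phi(M)$ directly, combining two ingredients: the canonical local form for immersions (the rank theorem) and the hypothesis that $\Phi$ is a homeomorphism onto its image. Write $k=\dim M$ and $n=\dim N$, fix a point $p\in\Phi(M)$, and let $q\in M$ be the (unique) preimage with $\Phi(q)=p$. Since $\Phi$ is an immersion, $D\Phi_q$ has rank $k$, so the rank theorem supplies smooth charts $(V,\f)$ for $M$ centered at $q$ and $(W,\psi)$ for $N$ centered at $p$ in which $\Phi$ has the form
\[
(\psi\circ\Phi\circ\f^{-1})(u^1,\dots,u^k)=(u^1,\dots,u^k,0,\dots,0).
\]
Shrinking if necessary, I would arrange that $\Phi(V)\subseteq W$, that $\f(V)=C:=(-\e,\e)^k\subseteq\R^k$, and that $\psi(W)=(-\e,\e)^n\subseteq\R^n$, so that $\psi(W)\cap(\R^k\times\{0\})=C\times\{0\}=\psi(\Phi(V))$.

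The remaining issue — and the actual content of the theorem — is that $\Phi(V)$ need not exhaust $\Phi(M)\cap W$, since a priori parts of $M$ lying far from $q$ could be carried by $\Phi$ into $W$. This is exactly where the embedding hypothesis enters: because $\Phi\colon M\to\Phi(M)$ is a homeomorphism onto its image (with the subspace topology), $\Phi(V)$ is open in $\Phi(M)$, so there is an open set $A\subseteq N$ with $\Phi(V)=A\cap\Phi(M)$. Put $U=W\cap A$, an open neighborhood of $p$ in $N$. Then, using $\Phi(V)\subseteq W$ in the last step,
\[
\Phi(M)\cap U=\Phi(M)\cap W\cap A=(\Phi(M)\cap A)\cap W=\Phi(V)\cap W=\Phi(V),
\]
so $\psi(\Phi(M)\cap U)=\psi(\Phi(V))=C\times\{0\}$. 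Moreover $\Phi(V)\subseteq U\subseteq W$ gives $C\times\{0\}=\psi(\Phi(V))\subseteq\psi(U)\subseteq\psi(W)$, and intersecting with $\R^k\times\{0\}$ forces $\psi(U)\cap(\R^k\times\{0\})=C\times\{0\}$ as well. Hence $(U,\psi|_U)$ is a $k$-slice chart for $\Phi(M)$ around $p$, namely $\psi(\Phi(M)\cap U)=\psi(U)\cap(\R^k\times\{0\})$.

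Since such a chart exists around every point of $\Phi(M)$, the local $k$-slice criterion is met and $\Phi(M)$ is an embedded submanifold of $N$ (of dimension $k$ on each component of $M$). I expect the only delicate point to be the passage from the local to the global picture: dropping the topological-embedding hypothesis yields merely an immersed submanifold, as the figure-eight and the dense line on the torus illustrate, so it is essential that $\Phi(V)$ be genuinely open in $\Phi(M)$ rather than just the image of an open set. I would therefore flag the step $\Phi(V)=A\cap\Phi(M)$ as the crux; the rest is routine bookkeeping with the canonical form and with shrinking cubes. \qed
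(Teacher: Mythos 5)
Your proof is correct, but note that the paper does not prove this statement at all: it is quoted verbatim as Theorem 5.31 of Lee's \emph{Introduction to Smooth Manifolds} and used as a black box in the proof of Theorem \ref{thm: manifold}. Your argument — rank-theorem canonical form for the immersion, then the topological-embedding hypothesis to shrink $W$ to $U=W\cap A$ so that $\Phi(M)\cap U=\Phi(V)$ is a single $k$-slice — is precisely the standard textbook proof via the local slice criterion, and your identification of the relative openness of $\Phi(V)$ in $\Phi(M)$ as the crux (the step that fails for the figure-eight or the irrational line on the torus) is exactly right.
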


\begin{proof}[Proof of Theorem \ref{thm: manifold}]
Let $M$ be the subset of $(\R^d)^k$ consisting of configurations of the form

\[
(e^1,...,e^{d-1},te^d,z^{d+1},...,z^k)
\]
with $t\in\R\setminus \{0\},z^i\in\R^d$, where $e^i$ is the $i$-th standard basis vector in $\R^d$.  We claim $M$ has a unique representative of every non-degenerate volume type.  To prove every volume type is represented, let $x\in (\R^d)^k$ be non-degenerate.  Let $t=\det(x^1,...,x^d)$ and let $g\in\text{SL}_d(\R)$ be such that $g(x^1,...,x^d)=(e^1,...,te^d)$.  For $i>d$, let $z^i=gx^i$.  This choice of $t$ and $z^i$ produces an element of $M$ with the same volume type as $x$.  To show this representation is unique, suppose $(e^1,...,e^{d-1},te^d,z^{d+1},...,z^k)$ and $(e^1,...,e^{d-1},t'e^d,w^{d+1},...,w^k)$ have the same volume type.  Considering the volumes of the first $d$ points, it is easy to see $t=t'$.  If $g$ is the element of $\text{SL}_d(\R)$ mapping the first configuration to the second, it follows that $g$ fixes a basis and is therefore the identity. \\

$M$ is a manifold of dimension $d(k-d)+1$, and we can take $t,z^{d+1},...,z^k$ as the coordinates of the point $(e^1,...,e^{d-1},te^d,z^{d+1},...,z^k)$.  If $\Phi(t,z^{d+1},...,z^k)$ is the volume type of $(e^1,...,e^{d-1},te^d,z^{d+1},...,z^k)$, then we have a smooth injective map $\Phi:M\to\R^{\binom{k}{d}}$.  We have

\[
t=\det(e^1,...,te^d),\ \ \ \ \ \text{and}\ \ \ \ \ tz_j^i=\det(e^1,...,e^{j-1},z^i,e^{j+1},...,te^d).
\]

Let $R_0$ be the row of the matrix $D\Phi$ corresponding to the component $\det(e^1,...,te^d)$, and for each $i,j>d$ let $R_{i,j}$ be the row corresponding to the component $\det(e^1,...,e^{j-1},z^i,e^{j+1},...,te^d)$.  Then $R_0$ has a 1 in the column corresponding to $\partial/\partial t$ and $0$ elsewhere.  The row $R_{i,j}$ has a $t$ in the column corresponding to $\partial/\partial z_j^i$, a $z_j^i$ in the column corresponding to $\partial/\partial t$, and $0$ elsewhere.  It is therefore clear that $D\Phi$ has full rank, so $\Phi$ is an immersion.  It is also clear that $\Phi$ and $\Phi^{-1}$ are smooth, so $\Phi$ is an embedding.  It follows from Theorem \ref{thm: EmbeddedSubmanifolds} that the image $\mathcal{V}_{k,d}$ is an embedded submanifold of $\R^{\binom{k}{d}}$.  The dimension of $\mathcal{V}_{k,d}$ must be $\dim M=d(k-d)+1$.

\end{proof}
\section{Bounds on relevant operators}

\subsection{Fourier integral operators and generalized Radon transforms}

To prove our theorems, we will employ the usual strategy of defining pushforward measures supported on our sets $\mathcal{V}_{k,d}(E)$ and $\mathcal{C}_{k,d}(E)$, taking approximations to those measures, and obtaining a uniform $L^2$ bound on those approximations.  This will reduce to using mapping properties of generalized Radon transforms, which we establish here.  We will be following the framework introduced in \cite{GIT20}. \\

Let $X$ and $Y$ be open subsets of $\R^{d\times(d-1)}$ and $\R^d$, respectively.  A \textbf{symbol} of order $m$ on $X\times Y\times \R$ is a smooth map $a:X\times Y\times \R\to\R$ satisfying the bound

\[
\left|\frac{\partial^n}{\partial \theta^n}a(x,y,\theta)\right|\lesssim (1+|\theta|)^{m-n}
\]

on compact subsets of $X\times Y$.  Also, for smooth phase functions $\f:X\times Y\times \R\to \R$, define

\[
C_\f=\left\{(x,\nabla_x \f(x,y,\theta),y,-\nabla_y \f(x,y,\theta):\theta\neq 0,\frac{\partial}{\partial\theta}\f(x,y,\theta)= 0\right\}.
\]

We view $C_\f$ as a subset of $(T^*X\setminus \{0\})\times (T^*Y\setminus\{0\})$.  Given any subset $C\subset (T^*X\setminus \{0\})\times (T^*Y\setminus\{0\})$ and any order $m\in\R$, define the class of Fourier integral operators of order $m$ and with canonical relation $C$, denoted by $I^m(C)$, to be those with Schwartz kernels which are locally finite sums of kernels of the form

\[
K(x,y)=\int e^{i\f(x,y,\theta)}a(x,y,\theta)\:d\theta
\]

where $C_\f$ is a relatively open subset of $C$ and $a$ is a symbol of order $m-\frac{1}{2}+\frac{d^2}{4}$.  We will use the following result.

\begin{thm}[\cite{GIT20}, Theorem 3.1]
\label{thm: FIOBound}
Let $C$ be a canonical relation and let $A\in I^{r-\frac{d^2-2d}{4}}$ have compactly supported Schwartz kernel.  Suppose the projections from $(T^*X\setminus \{0\})\times (T^*Y\setminus\{0\})$ to each factor, restricted to $C$, have full rank (so the first is an immersion and the second is a submersion).  Then $A$ is a bounded operator $L^2(Y)\to L_{-r}^2(X)$.
\end{thm}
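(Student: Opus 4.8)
The plan is to deduce the estimate from the $T^*T$ method together with the clean-composition calculus for Fourier integral operators; the loss of $\tfrac{d^2-2d}{4}$ derivatives will arise precisely as the contribution of the excess of the composition $C^t\circ C$. Since $A$ has compactly supported Schwartz kernel and is a locally finite sum of oscillatory integrals, we reduce to a single oscillatory integral and assume throughout (as holds in our applications) that the immersion $\pi_L\colon C\to T^*X\setminus\{0\}$ is injective, hence an embedding onto a $d^2$-dimensional submanifold $\Sigma\subset T^*X\setminus\{0\}$. Write $\mu=r-\tfrac{d^2-2d}{4}$ for the order of $A$.

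Next I would carry out the $T^*T$ reduction. Let $\Lambda_X^{-2r}$ be a properly supported, elliptic pseudodifferential operator of order $-2r$ on $X$ with symbol $(1+|\xi|^2)^{-r}$, so that $\langle\Lambda_X^{-2r}u,u\rangle=\|u\|_{L^2_{-r}(X)}^2$ modulo a harmless smoothing term. Then
\[
\|Af\|_{L^2_{-r}(X)}^2\ \lesssim\ \langle A^*\Lambda_X^{-2r}Af,\ f\rangle\ \leq\ \|A^*\Lambda_X^{-2r}Af\|_{L^2(Y)}\,\|f\|_{L^2(Y)},
\]
and it suffices to prove that $T:=A^*\Lambda_X^{-2r}A$ is bounded on $L^2(Y)$. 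By the standard calculus, $\Lambda_X^{-2r}A\in I^{\mu-2r}(C)$ (composing a pseudodifferential operator with a Fourier integral operator adds orders and leaves the canonical relation unchanged) and $A^*\in I^{\mu}(C^t)$ (the adjoint preserves order and transposes the canonical relation, the symbol being real-valued), so $T\in I^{\,2\mu-2r+e/2}(C^t\circ C)$, where $e$ is the excess of the composition.

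The heart of the matter is to identify $C^t\circ C$ and compute $e$. A point $(y',\eta';y,\eta)$ lies in $C^t\circ C$ exactly when there is $(x,\xi)\in T^*X\setminus\{0\}$ with both $(x,\xi;y,\eta)$ and $(x,\xi;y',\eta')$ in $C$; since $\pi_L$ is injective the point $(x,\xi)\in\Sigma$ has a unique preimage in $C$, forcing $(y,\eta)=(y',\eta')$, so $C^t\circ C\subset\Delta_{T^*Y}$. Conversely, $\pi_R\colon C\to T^*Y\setminus\{0\}$ being a submersion has open image, so $C^t\circ C$ is exactly the corresponding relatively open piece of $\Delta_{T^*Y}$. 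For the excess, the fibre product $(C^t\times C)\cap(T^*Y\times\Delta_{T^*X}\times T^*Y)$ is, by injectivity of $\pi_L$, the diagonally embedded copy of $C$, of dimension $d^2$, whereas $\dim(C^t\circ C)=\dim\Delta_{T^*Y}=2d$; hence $e=d^2-2d$, and the constant ranks of $\pi_L$ and $\pi_R$ show the composition is clean with this excess. Since $2\mu-2r=-\tfrac{d^2-2d}{2}$, we conclude $T\in I^{0}(\Delta_{T^*Y})=\Psi^{0}(Y)$: a pseudodifferential operator of order $0$ on $Y$. Having compactly supported kernel, it is bounded on $L^2(Y)$ by the Calder\'on--Vaillancourt theorem, which completes the argument.

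The step I expect to be the main obstacle is the verification that $C^t\circ C$ is a clean composition with excess exactly $d^2-2d$: beyond the set-theoretic identities above, one must check the tangential cleanness and the constant-rank statements so that the H\"ormander--Duistermaat composition theorem genuinely puts $T$ in $I^{0}$. This is where both hypotheses are essential: injectivity of the immersion $\pi_L$ forces $C^t\circ C$ into the diagonal (so $T$ is a pseudodifferential operator and not a genuine Fourier integral operator, for which no such $L^2$ bound need hold), while submersivity of $\pi_R$ fixes the excess at $d^2-2d$, exactly the value that cancels the order $-\tfrac{d^2-2d}{2}$ from the symbol bookkeeping. When $d=2$ the excess vanishes, $C$ is a local canonical graph, and the statement reduces to H\"ormander's classical $L^2$ theorem for Fourier integral operators. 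A more computational alternative decomposes $A$ dyadically in $|\theta|$ and estimates the pieces $A_j^*\Lambda_X^{-2r}A_j$ by stationary phase, using the two rank conditions to control the $x$-integration.
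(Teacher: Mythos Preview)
The paper does not prove this statement at all: as the bracketed attribution indicates, Theorem~\ref{thm: FIOBound} is quoted from \cite{GIT20} and used as a black box to derive Theorem~\ref{thm: SobolevBound}. There is therefore no proof in the paper to compare your proposal against.

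For what it is worth, your $T^*T$/clean-composition outline is the standard route to such unequal-dimension FIO estimates and the order bookkeeping is correct: $\dim C=\dim X+\dim Y=d(d-1)+d=d^2$, under injectivity of $\pi_L$ the fibre product $C^t\times_{T^*X}C$ is a diagonal copy of $C$, and $C^t\circ C$ lands in the $2d$-dimensional diagonal $\Delta_{T^*Y}$, giving excess $e=d^2-2d$ and hence $T\in\Psi^0(Y)$. Your added assumption that the immersion $\pi_L$ be globally injective is not in the stated hypotheses, but it can always be arranged by localizing the Schwartz kernel, so it is a harmless reduction.
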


Let $\Phi,\eta:X\times Y\to \R$ be smooth and let $\eta$ be compactly supported.  A \textbf{generalized Radon transform} is an operator of the form

\[
Af(x)=\int_{\Phi(x,y)=0}f(y)\eta(x,y)\:d\sigma_x(y),
\]

where $\sigma_x$ is the induced surface measure on the surface defined by $\Phi(x,y)=0$.  This can be written in terms of the delta distribution (and its Fourier transform) as an oscillatory integral; we have

\begin{align*}
    Af(x)&=\int_{\Phi(x,y)=0}f(y)\eta(x,y)\:d\sigma_x(y) \\
    &=\int \delta(\Phi(x,y))f(y)\eta(x,y)\:dy \\
    &=\int\int e^{2\pi i \Phi(x,y)\theta}f(y)\eta(x,y)1(\theta)\:d\theta\:dy
\end{align*}

Therefore, $A$ is a Fourier integral operator with phase function $2\pi\Phi(x,y)\theta$ and amplitude $\eta(x,y)\theta$.  The symbol $\eta(x,y)\theta$ has order 0, so our generalized radon transforms are Fourier integral operators of order $\frac{2-d^2}{4}$.  This means Theorem \ref{thm: FIOBound} applies with $r=-\frac{d-1}{2}$, assuming the condition on the canonical relation holds. \\

The generalized radon transforms we will be interested in are those given by the determinant function.  Throughout this paper, $\mathcal{R}_t$ will denote the operator

\[
\mathcal{R}_tf(x^1,\cdots x^{d-1})=\int_{\det(x^1,\cdots,x^d)=t}f(x^d)\eta(x^1,\cdots,x^d)\:d\sigma_{t,x^1,\cdots,x^{d-1}}(x^d)
\]

where $\sigma_{t,x^1,\cdots,x^{d-1}}$ is the surface measure.  These operators are shown to satisfy the canonical relation hypothesis of Theorem \ref{thm: FIOBound} in \cite{GIT20}, which implies the following Sobolev bound for $\mathcal{R}_t$.

\begin{thm}
\label{thm: SobolevBound}
The generalized Radon transform $\mathcal{R}_t$ defined above is a bounded operator $L^2(\R^d)\to L_{\frac{d-1}{2}}^2((\R^d)^{d-1})$.
\end{thm}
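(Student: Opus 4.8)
\textbf{Proof proposal for Theorem \ref{thm: SobolevBound}.}

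The plan is to realize $\mathcal{R}_t$ as a generalized Radon transform in the precise sense set up above --- with defining function $\Phi(x^1,\dots,x^{d-1},x^d) = \det(x^1,\dots,x^d) - t$ viewed as a map on $X\times Y$ where $X\subset\R^{d\times(d-1)}$ is an open set of $(d-1)$-tuples and $Y\subset\R^d$ --- and then to verify the hypothesis of Theorem \ref{thm: FIOBound}. As computed in the paragraphs preceding the statement, $\mathcal{R}_t$ is a Fourier integral operator of order $\frac{2-d^2}{4}$, so it lies in $I^{r - \frac{d^2-2d}{4}}$ with $r = -\frac{d-1}{2}$. Thus once the canonical relation condition is checked, Theorem \ref{thm: FIOBound} gives boundedness $L^2(Y)\to L^2_{-r}(X) = L^2_{(d-1)/2}(X)$, which is exactly the assertion (after the routine step of patching the local estimate on relatively compact coordinate patches, using that $\eta$ is compactly supported). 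So the entire content is the geometry of the canonical relation.

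The canonical relation is $C_\Phi = \{(x,\nabla_x(\theta\Phi), y, -\nabla_y(\theta\Phi)) : \theta\neq 0,\ \Phi(x,y)=0\}$; since $\Phi$ is linear in $\theta$ the constraint $\partial_\theta(\theta\Phi)=0$ is just $\Phi(x,y)=0$, i.e.\ $\det(x^1,\dots,x^d)=t$, and the fiber variable $\theta$ appears as an overall scaling on the covectors. Writing $\xi = \theta\nabla_x\Phi$ and $\zeta = -\theta\nabla_y\Phi$, I need to show: (i) the projection $\pi_L : C_\Phi \to T^*X\setminus 0$, $(x,\xi,y,\zeta)\mapsto(x,\xi)$, is an immersion; and (ii) the projection $\pi_R : C_\Phi\to T^*Y\setminus 0$, $(x,\xi,y,\zeta)\mapsto(y,\zeta)$, is a submersion. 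Here $C_\Phi$ has dimension $\dim X + \dim Y - 1 + 1 = d(d-1) + d = d^2$ (the $-1$ from the constraint $\Phi=0$, the $+1$ from the free parameter $\theta$), while $T^*X$ and $T^*Y$ have dimensions $2d(d-1)$ and $2d$ respectively. Since $\nabla_y\Phi(x,y) = \nabla_{x^d}\det(x^1,\dots,x^d)$ is the vector of cofactors of the last column, it is nonzero precisely when $x^1,\dots,x^{d-1}$ are linearly independent, which holds on $X$ after shrinking; hence $\zeta\neq 0$ for $\theta\neq0$, and one checks similarly that $\xi\neq0$, so $C_\Phi$ avoids the zero sections. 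For (ii), I would parametrize $C_\Phi$ by $(x, y, \theta)$ subject to $\det(x^1,\dots,x^d)=t$; the map $\pi_R$ sends this to $(y, -\theta\,\mathrm{cof}_d(x))$, and since $y$ ranges freely and $\theta$ scales the (nonzero) cofactor vector while the remaining $x$-variables still move, a dimension count together with the nonvanishing of the cofactor vector shows $\pi_R$ is a submersion. For (i), the cleanest route is to note that $\mathcal{R}_t$ is, up to the determinant being an affine function of $x^d$ for fixed $x^1,\dots,x^{d-1}$, a translation-invariant-type averaging operator, and to appeal to the explicit computation in \cite{GIT20}; indeed the sentence before the statement says exactly that ``these operators are shown to satisfy the canonical relation hypothesis of Theorem \ref{thm: FIOBound} in \cite{GIT20}.'' So the honest proof is: cite \cite{GIT20} for the canonical-relation verification, then combine with the order count to invoke Theorem \ref{thm: FIOBound}.

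The main obstacle --- and the only nontrivial point --- is verifying that $\pi_L$ is an immersion, equivalently that the mixed Hessian of $\Phi$ (the $d(d-1)\times d$ matrix $\partial^2\Phi/\partial x\,\partial y$, augmented appropriately to account for the $\theta$ direction) has maximal rank $d$ on $C_\Phi$; geometrically this is the statement that the incidence relation ``$x^d$ lies on the affine hyperplane $\{z : \det(x^1,\dots,x^{d-1},z)=t\}$'' has a nondegenerate (``rotational curvature''/Phong--Stein) condition. This is precisely the computation carried out in \cite{GIT20}, so I would reproduce the key determinant identity from there: for fixed $(x^1,\dots,x^{d-1})$ the map $x^d\mapsto \det(x^1,\dots,x^d)$ is affine with gradient the cofactor vector, and the second fundamental form of the level hypersurface together with the way the hyperplane tilts as $(x^1,\dots,x^{d-1})$ varies produces a Monge--Ampère-type determinant that is nonzero exactly off the degenerate locus. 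Everything else --- the symbol order bookkeeping, the reduction from a global to a local estimate via a partition of unity subordinate to relatively compact charts, and the identification $L^2_{-r} = L^2_{(d-1)/2}$ --- is routine. Hence:

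\begin{proof}[Proof of Theorem \ref{thm: SobolevBound}]
By the discussion above, $\mathcal{R}_t$ is a generalized Radon transform associated to the defining function $\Phi(x^1,\dots,x^d) = \det(x^1,\dots,x^d)-t$, hence a Fourier integral operator with phase $2\pi\theta\,\Phi(x^1,\dots,x^d)$ and symbol $\eta(x^1,\dots,x^d)\theta$ of order $0$; therefore $\mathcal{R}_t\in I^{\frac{2-d^2}{4}} = I^{r-\frac{d^2-2d}{4}}$ with $r=-\frac{d-1}{2}$. It is shown in \cite{GIT20} that the canonical relation $C_\Phi$ of this operator avoids the zero sections (because the cofactor vector of the last column of $(x^1,\dots,x^d)$ is nonvanishing once $x^1,\dots,x^{d-1}$ are linearly independent, which we may assume after restricting $X$) and that both projections $C_\Phi\to T^*X\setminus 0$ and $C_\Phi\to T^*Y\setminus 0$ have full rank, the former being an immersion and the latter a submersion. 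By Theorem \ref{thm: FIOBound}, $\mathcal{R}_t : L^2(\R^d)\to L^2_{-r}((\R^d)^{d-1}) = L^2_{\frac{d-1}{2}}((\R^d)^{d-1})$ is bounded. (A partition of unity subordinate to relatively compact coordinate charts, together with the compact support of $\eta$, upgrades the local statement to the global one claimed.)
\end{proof}
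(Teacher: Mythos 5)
Your proposal matches the paper's treatment: the paper likewise records that $\mathcal{R}_t$ is a Fourier integral operator of order $\frac{2-d^2}{4}$ (so $r=-\frac{d-1}{2}$), cites \cite{GIT20} for the verification of the canonical relation hypothesis, and then invokes Theorem \ref{thm: FIOBound} with no further argument. Your additional sketch of the immersion/submersion geometry is consistent with that reference, so the proposal is correct and essentially identical in approach.
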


\subsection{Frostman measures and Littlewood-Paley projections}

The following theorem is frequently used to study the dimension of fractal sets; see, for example, \cite{W03}.

\begin{thm}[Frostman's Lemma]
\label{thm: Frostman}
Let $E\subset\R^d$ be compact.  For any $s<\dim E$, there is a Borel probability measure $\mu$ supported on $E$ satisfying

\[
\mu(B_r(x))\lesssim r^s
\]

for all $x\in\R^d$ and all $r>0$.
\end{thm}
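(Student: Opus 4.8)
The plan is the classical dyadic mass-distribution construction. By translating and rescaling $E$ (which only alters the implied constant in the conclusion) we may assume $E\subset[0,1)^d$. Since $s<\dim E$ we have $\mathcal H^s(E)=\infty$; in particular the $s$-dimensional dyadic Hausdorff content $c_s:=\inf\sum_i\operatorname{side}(Q_i)^s$, the infimum taken over all covers of $E$ by half-open dyadic subcubes of $[0,1)^d$, is strictly positive (it is bounded below by a dimensional constant times $\mathcal H^s_\infty(E)$, and $\mathcal H^s_\infty(E)>0$ because $\mathcal H^s(E)>0$: a cover witnessing $\mathcal H^s_\infty(E)=0$ would also witness $\mathcal H^s_\delta(E)\to 0$, forcing $\mathcal H^s(E)=0$). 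I will produce, for each $m\in\N$, a finite measure $\mu_m$ with $\mu_m(Q)\le\operatorname{side}(Q)^s$ for every dyadic cube $Q$ of generation at most $m$ and with $\|\mu_m\|\ge c_s$, then pass to a weak-$*$ limit and convert the cube estimate into the stated ball estimate.

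To construct $\mu_m$: write $\mathcal D_n$ for the half-open dyadic cubes of side $2^{-n}$; initialize by placing mass $2^{-ms}$ (as a multiple of Lebesgue measure) on each $Q\in\mathcal D_m$ meeting $E$; then, for $n=m-1,m-2,\dots,0$ in turn, whenever a cube $Q\in\mathcal D_n$ currently carries mass exceeding $\operatorname{side}(Q)^s=2^{-ns}$, rescale the masses of all of its descendants by the common factor $2^{-ns}/(\text{current mass of }Q)<1$. Call a cube \emph{saturated} if it ends with mass exactly $\operatorname{side}(Q)^s$. One checks that the invariant $\mu_m(Q)\le\operatorname{side}(Q)^s$ is preserved throughout; that saturated cubes are pairwise disjoint (a saturated cube has no strict dyadic ancestor that was ever capped, so two saturated cubes cannot be nested); and that every generation-$m$ cube meeting $E$ is contained in a saturated cube (if it is not itself saturated, its topmost ever-capped ancestor is saturated and contains it). Hence the saturated cubes form a dyadic cover of $E$ carrying all of $\mu_m$'s mass, so $\|\mu_m\|=\sum_{Q\text{ saturated}}\operatorname{side}(Q)^s\ge c_s$, while trivially $\|\mu_m\|\le 1$.

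Each $\mu_m$ is supported within $\sqrt{d}\,2^{-m}$ of $E$, so by weak-$*$ compactness of measures of bounded mass on a fixed compact neighborhood of $E$, a subsequence converges to a measure $\mu$ supported on $E$ (here the closedness of $E$ is used) with $c_s\le\|\mu\|\le 1$. Weak-$*$ convergence gives $\mu(U)\le\liminf_m\mu_m(U)$ for every open $U$. Given $x\in\R^d$ and $r>0$, choose $n$ with $2^{-n}\le r<2^{-n+1}$ and let $U$ be the open cube of side $3\cdot 2^{-n}$ centered at $x$, so $B_r(x)\subset U$; since $U$ is covered by at most $6^d$ cubes of $\mathcal D_n$, each of $\mu_m$-mass at most $2^{-ns}$, we get $\mu(B_r(x))\le\mu(U)\le 6^d\,2^{-ns}\le 6^d r^s$. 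Finally $\tilde\mu:=\mu/\|\mu\|$ is a Borel probability measure on $E$ with $\tilde\mu(B_r(x))\le(6^d/c_s)\,r^s$, which is the assertion (the constant also absorbs the initial rescaling).

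The step most in need of care is the combinatorial bookkeeping of the cap-off procedure — simultaneously verifying the uniform upper bounds $\mu_m(Q)\le\operatorname{side}(Q)^s$ and the lower bound $\|\mu_m\|\ge c_s$ via the cover of $E$ by saturated cubes — together with the passage to the limit, where the upper bound is stated only for half-open dyadic cubes while weak-$*$ convergence cooperates only with open sets; the remedy, as above, is never to test $\mu$ on a dyadic cube directly but only on a slightly dilated open cube, at the cost of a harmless dimensional constant. (An alternative route is Frostman's original argument producing $\mu$ as a max-flow on the dyadic tree whose value equals the min-cut $c_s$; the direct construction above avoids invoking max-flow/min-cut on an infinite network.)
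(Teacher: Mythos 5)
The paper does not actually prove this statement: it is quoted as a standard result with a pointer to Wolff's lecture notes \cite{W03}, so there is no internal proof to compare against. Your argument is the classical dyadic mass-distribution construction from exactly those references, and it is essentially correct: the reduction to positive dyadic Hausdorff content, the cap-off construction of $\mu_m$, the weak-$*$ limit, and the passage from dyadic cubes to balls are all sound. Two small slips, both harmless. First, saturated cubes need not be pairwise disjoint: a cube whose mass lands \emph{exactly} at the threshold $\operatorname{side}(Q)^s$ without ever being capped is saturated and may have saturated descendants (e.g.\ $d=2$, $s=1$, a cube with exactly two saturated children of mass $2^{-(n+1)}$ each acquires mass $2^{-n}$ with no capping). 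Your parenthetical shows a saturated cube has no \emph{capped} strict ancestor, not that it has no \emph{saturated} strict ancestor, so the identity $\|\mu_m\|=\sum_{Q\ \mathrm{saturated}}\operatorname{side}(Q)^s$ can overcount. The fix is standard: sum only over the maximal saturated cubes, which are disjoint, still cover $E$, and still give $\|\mu_m\|\ge c_s$, which is all you use. Second, with $2^{-n}\le r<2^{-n+1}$ the ball $B_r(x)$ has radius up to nearly $2\cdot 2^{-n}$, so it need not lie in the open cube of side $3\cdot 2^{-n}$ centered at $x$; take side $5\cdot 2^{-n}$ (or index so that $r\le 2^{-n}$), which only changes the dimensional constant. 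With these repairs the proof is complete, and the constant in the conclusion depends on $E$ and $s$ through $c_s$, which is the standard (and intended) form of the lemma.
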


A measure $\mu$ as in the theorem is called a Frostman probability measure of exponent $s$. \\

We will be interested in the Littlewood-Paley decomposition of Frostman measures.  Let $\mu$ be a Frostman probability measure on $\R^d$ with exponent $s$ and compact support.  Then $\mu_j$ is the $j$-th Littlewood-Paley piece of $\mu$, defined by $\widehat{\mu_j}(\x)=\psi(2^{-j}\x)\widehat{\mu}(\x)$ where $\psi$ is a Schwarz function supported in the range $\frac{1}{2}\leq |\x|\leq 4$ and constantly equal to 1 in the range $1\leq |\x|\leq 2$.  We will use the following bounds.

\begin{lem}
\label{lem: LittlewoodPaleyBounds}
Let $\mu$ be a compactly supported Frostman probability measure with exponent $s$, and let $(f\mu)_j$ be the $j$-th Littlewood Paley piece of the measure $f\mu$ for a function $f$.  Then

\[
    \|(f\mu)_j\|_{L^\infty}\lesssim 2^{j(d-s)} \|f\|_{L^\infty(\mu)}
    \]
    and
    \[
    \|(f\mu)_j\|_{L^2}^2\lesssim 2^{j(d-s)} \|f\|_{L^2(\mu)}^2
\]
\end{lem}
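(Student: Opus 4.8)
The plan is to deduce both inequalities from a single pointwise estimate: for any Schwartz function $h\colon\R^d\to\C$, writing $h_j(z)=2^{jd}h(2^jz)$, one has
\[
\sup_{x\in\R^d}\int_{\R^d}|h_j(x-y)|\:d\mu(y)\lesssim 2^{j(d-s)}\qquad(j\geq 0),
\]
with implied constant depending on $h$ and $s$ but not on $j$. Granting this, the $L^\infty$ bound is essentially immediate. Let $g$ be the inverse Fourier transform of $\psi$, which is a Schwartz function; then $(f\mu)_j=(f\mu)*g_j$ with $g_j(z)=2^{jd}g(2^jz)$, so
\[
|(f\mu)_j(x)|\leq\int|g_j(x-y)|\,|f(y)|\:d\mu(y)\leq\|f\|_{L^\infty(\mu)}\int|g_j(x-y)|\:d\mu(y)\lesssim 2^{j(d-s)}\|f\|_{L^\infty(\mu)}.
\]

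To prove the pointwise estimate I would fix $x$ and split $\R^d$ into the ball $A_0=\{y:|x-y|\leq 2^{-j}\}$ and the dyadic shells $A_\ell=\{y:2^{\ell-1-j}<|x-y|\leq 2^{\ell-j}\}$ for $\ell\geq 1$. On $A_0$ one uses $|h_j(x-y)|\leq 2^{jd}\|h\|_{L^\infty}$ together with the Frostman bound $\mu(A_0)\lesssim 2^{-js}$, which already accounts for a contribution of size $2^{j(d-s)}$. On $A_\ell$ the vector $2^j(x-y)$ has magnitude $\gtrsim 2^\ell$, so Schwartz decay gives $|h_j(x-y)|\lesssim_N 2^{jd}2^{-\ell N}$ for every $N$, while $\mu(A_\ell)\leq\mu(B_{2^{\ell-j}}(x))\lesssim 2^{(\ell-j)s}$; the $\ell$-th shell therefore contributes $\lesssim_N 2^{j(d-s)}2^{\ell(s-N)}$, and summing over $\ell\geq 0$ with $N>s$ fixed closes the estimate.

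For the $L^2$ bound I would expand $\|(f\mu)_j\|_{L^2}^2=\langle(f\mu)*g_j,\,(f\mu)*g_j\rangle$ in physical space and integrate in $x$ first, which is legitimate since $g$ is Schwartz and $f\mu$ is a finite complex measure (note $f\in L^2(\mu)\subseteq L^1(\mu)$ because $\mu$ is a probability measure). This yields
\[
\|(f\mu)_j\|_{L^2}^2=\int\!\!\int\Big(\int g_j(x-y)\,\overline{g_j(x-z)}\:dx\Big)f(y)\overline{f(z)}\:d\mu(y)\,d\mu(z)=\int\!\!\int G_j(z-y)\,f(y)\overline{f(z)}\:d\mu(y)\,d\mu(z),
\]
where $G_j(u)=2^{jd}G(2^ju)$ and $G(u)=\int g(w)\overline{g(w-u)}\:dw$ is a Schwartz function whose modulus is even. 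Using $|f(y)\overline{f(z)}|\leq\tfrac12(|f(y)|^2+|f(z)|^2)$ and symmetrizing via the evenness of $|G_j|$, one obtains
\[
\|(f\mu)_j\|_{L^2}^2\leq\int|f(y)|^2\Big(\int|G_j(y-z)|\:d\mu(z)\Big)d\mu(y)\lesssim 2^{j(d-s)}\int|f(y)|^2\:d\mu(y)=2^{j(d-s)}\|f\|_{L^2(\mu)}^2,
\]
the inner integral being controlled by the pointwise estimate applied with $h=G$. None of this is difficult; the only real content is the annular decomposition, and the only thing to watch is that the Frostman loss $2^{(\ell-j)s}$ on each shell is defeated by the bump's decay, which is precisely what produces the gain from $2^{jd}$ down to $2^{j(d-s)}$.
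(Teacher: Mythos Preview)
Your proof is correct. The $L^\infty$ argument is identical to the paper's: both convolve with a rescaled Schwartz bump and run the same dyadic annular decomposition against the Frostman bound. For the $L^2$ bound, both you and the paper arrive at the same bilinear expression $\iint G_j(y-z)\,f(y)\overline{f(z)}\,d\mu(y)\,d\mu(z)$ with a rescaled Schwartz kernel (the paper via Plancherel and Fourier inversion, you via expanding the convolution square directly), but the final estimate differs slightly: the paper truncates the kernel to $|y-z|<2^{-j}$ and applies Schur's test to the resulting indicator kernel, whereas you keep the full Schwartz kernel, symmetrize with $|f(y)\overline{f(z)}|\leq\tfrac12(|f(y)|^2+|f(z)|^2)$, and feed the inner integral back into the pointwise estimate already established. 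Your version is marginally tidier because it avoids the near/far split and reuses the $L^\infty$ machinery verbatim; the paper's version isolates the Schur-test structure more explicitly. Either way the content is the same annular-decomposition bound.
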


\begin{proof}

Firstly, let us prove the $L^{\infty}$ bound.  Since $\|(f\mu)_j\|_{L^\infty}\leq \|f\|_{L^\infty(\mu)} \|\mu_j\|_{L^\infty}$ it suffices to prove the bound in the case $f=1$.  Observe that 

\[
    (f\mu)_j(x)=2^{dj}\widecheck{\psi}(2^j\cdot)*f\mu(x)
\]
    
 Since $\psi$ is a Schwarz function, we have $\psi(x)\lesssim (1+|x|)^{-2}$. Therefore,
 
\begin{equation*}
    |\mu_j(x)|\lesssim 2^{dj}\int(1+2^j|x-y|)^{-2} d\mu(y)
\end{equation*}

Splitting this integral into two parts: $2^j|x-y|<1$ and $2^j|x-y|>1$. We have

\begin{align*}
   & 2^{dj}\int_{2^j|x-y|<1}(1+2^j|x-y|)^{-2} d\mu(y)\\
    &\lesssim 2^{dj}\mu(\{y:2^j|x-y|<1\}) \\
    &\lesssim  2^{j(d-s)}
\end{align*}

and 

\begin{align*}
   & 2^{dj}\int_{2^j|x-y|>1}(1+2^j|x-y|)^{-2} d\mu(y)\\
    &= 2^{dj}\displaystyle\sum_{i=0}^{\infty}\int_{2^i\leq 2^j|x-y|\leq 2^{i+1}}(1+2^j|x-y|)^{-2} d\mu(y) \\
    &\lesssim  2^{dj}\displaystyle\sum_{i=0}^{\infty}2^{-2i}\mu(\{y:2^i\leq 2^{j}|x-y|\leq 2^{i+1}\})\\
    &\lesssim  2^{j(d-s)}\displaystyle\sum_{i=0}^{\infty}2^{i(s-2)}\\
    &\lesssim  2^{j(d-s)}
\end{align*}

Thus, we get the first result as claimed.  To prove the $L^2$ bound, we first observe that

\begin{align*}
    \|(f\mu)_j\|_{L^2}^2&=\|\widehat{(f\mu)_j}\|_{L^2}^2 \\
    &=\int|\widehat{f\mu}(\xi)|^2\psi_j^2(\xi)d\xi \\
    &=2^{jd}\int\int \widehat{\psi^2}(2^j(x-y))f(x)f(y) d\mu(x)d\mu(y)
\end{align*}

where we have used Fourier inversion in the last line.  Break the integral into two parts corresponding to $|x-y|< 2^{-j}$ and $|x-y|> 2^{-j}$, where $C$ is a large constant.  Since $\psi$ is a Schwartz function, it suffices to bound the first part.  Let $K_j=2^{dj}\chi_{\{|x-y|<2^{-j}\}}$ and let $T_jf(x)=\int K_j(x,y)f(y)d\mu(y)$.  Our goal is to prove $\left\langle T_jf,f\right\rangle_{L^2(\mu)}\lesssim 2^{j(d-s)}\|f\|_{L^2(\mu)}$.  By Cauchy-Schwarz, it suffices to show the norm of $T_j$ as an operator $L^2(\mu)\to L^2(\mu)$ is bounded by $2^{j(d-s)}$.  This follows from Schur's test, as 

\[
\int K(x,y)d\mu(x)=\int K(x,y)d\mu(y)\lesssim 2^{j(d-s)}.
\]

\end{proof}

The generalized Radon transform applied to $\mu_j$ also has Fourier transform concentrated at scale $2^j$.  This together with Theorem \ref{thm: SobolevBound} allows us to prove the following bounds.  Here and throughout, given $f_1,...,f_n:X\to\R$, the function $f_1\otimes\cdots\otimes f_n$ is the function $X^n\to\R$ given by

\[
f_1\otimes\cdots\otimes f_n(x^1,...,x^n)=f_1(x^1)\cdots f_n(x^n)
\]

\begin{lem}
\label{lem: GRTBounds}
Let $\f$ be a smooth function which is supported on $[-1,1]$ and equal to 1 on $[-1/2,1/2]$, and let $\f^\e(t)=\e^{-1}\f(\e^{-1}t)$.  Let $\eta:(\R^d)^d\to \R$ be a smooth cutoff function supported in the region $|x^i-e^i|<c$ where $e^i$ is the $i$-th standard basis vector and $c$ is a small positive constant.  Finally, let $\mathcal{R}_t^\e$ be the approximate generalized Radon transform defined by
\[
\mathcal{R}_t^\e f(x^1,...,x^{d-1})=\int f(x^d)\eta(x^1,...,x^d)\f^\e(\det(x^1,...,x^d)-t)\:dx^d.
\]

If $c$ is sufficiently small, we have the following.

\begin{enumerate}[(i)]
\item

\[
\|\mathcal{R}_t^\e (f\mu)_j\|_{L^2}^2\lesssim 2^{j(1-s)} \|f\|_{L^2(\mu)}^2.
\]

\item If $j,j_1,...,j_{d-1}$ are any indices such that $|j-j_i|>5$ for any $i$, then for every number $N$ and functions $f,f_1,...,f_{d-1}$ we have

\[
\left\langle \mathcal{R}_t^\e (f\mu)_j,(f_1\mu)_{j_1}\otimes\cdots\otimes (f_{d-1}\mu)_{j_{d-1}}\right\rangle \lesssim_N 2^{-N\cdot\max(j,j_1,...,j_{d-1})},
\]

where $\left\langle \cdot,\cdot \right\rangle$ is the inner product on $L^2(\R^{d-1})$.
\end{enumerate}

\end{lem}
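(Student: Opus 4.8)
The plan is to realize $\mathcal{R}_t^\e$ as a Fourier integral operator of exactly the same type as $\mathcal{R}_t$ and then exploit frequency localization. Writing $\f^\e(u)=\int\widehat{\f}(\e\theta)e^{2\pi i u\theta}\,d\theta$ we have
\[
\mathcal{R}_t^\e f(x^1,\dots,x^{d-1})=\int\!\!\int e^{2\pi i(\det(x^1,\dots,x^d)-t)\theta}\,\widehat{\f}(\e\theta)\,\eta(x^1,\dots,x^d)\,f(x^d)\,d\theta\,dx^d ,
\]
so $\mathcal{R}_t^\e$ has the same phase $2\pi(\det(x^1,\dots,x^d)-t)\theta$ as $\mathcal{R}_t$ and amplitude $\widehat{\f}(\e\theta)\eta(x)$. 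Since $\f$ is smooth and compactly supported, $\widehat{\f}$ is Schwartz, and a short computation shows $\{\widehat{\f}(\e\theta)\}_{0<\e\le 1}$ is a bounded family of symbols of order $0$; hence $\{\widehat{\f}(\e\theta)\eta(x)\}_\e$ is a bounded family of order-$0$ amplitudes, and Theorems~\ref{thm: FIOBound} and~\ref{thm: SobolevBound} apply to $\mathcal{R}_t^\e$ with constants uniform in $\e$ and in $t$ (and, since $\det$ is $O(1)$ on $\operatorname{supp}\eta$, we may assume $t$ ranges over a fixed bounded set). We also record the elementary fact that on $\operatorname{supp}\eta$ one has $\nabla_{x^m}\det(x^1,\dots,x^d)=e^m+O(c)$ for $1\le m\le d$ (these gradients are cofactor vectors of a matrix close to the identity), so $|\nabla_{x^m}\det|\approx 1$ there; this is precisely what makes the canonical relation of $\mathcal{R}_t^\e$ couple only frequencies of comparable magnitude across all $d$ slots.

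The technical core is a single oscillatory integral estimate. Fourier-expanding $(f\mu)_j$ and (in (ii)) each $(f_m\mu)_{j_m}$ reduces both the frequency-support analysis of $\mathcal{R}_t^\e(f\mu)_j$ and the pairing in (ii) to the kernel integral
\[
I(\theta,\zeta,\zeta_1,\dots,\zeta_{d-1})=\int\eta(x^1,\dots,x^d)\exp\!\Big(2\pi i\big(\theta\det(x^1,\dots,x^d)+x^d\cdot\zeta-\textstyle\sum_{m=1}^{d-1}x^m\cdot\zeta_m\big)\Big)\,dx^1\cdots dx^d .
\]
On $\operatorname{supp}\eta$ the $x$-gradient of this phase has $x^m$-component $\theta e^m-\zeta_m+O(c|\theta|)$ for $m\le d-1$ and $x^d$-component $\theta e^d+\zeta+O(c|\theta|)$, while every higher $x$-derivative is $O(|\theta|)$. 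Repeated non-stationary-phase integration by parts — splitting into the regimes in which $|\theta|$ is, or is not, dominated by $|\zeta|+\sum_m|\zeta_m|$, so that in each case the higher derivatives are controlled by the gradient — gives, for $c$ small and every $M$,
\[
|I(\theta,\zeta,\zeta_1,\dots,\zeta_{d-1})|\lesssim_M\Big(1+|\theta e^d+\zeta|+\textstyle\sum_{m=1}^{d-1}|\theta e^m-\zeta_m|\Big)^{-M} .
\]
Fed back through the Fourier expansion of $(f\mu)_j$ alone, this shows $\mathcal{R}_t^\e(f\mu)_j$ is, up to an error of $L^2$ norm $\lesssim_N 2^{-Nj}\|f\|_{L^2(\mu)}$, supported at frequencies $(\xi_1,\dots,\xi_{d-1})$ with $|\xi_m|\approx 2^j$ for every $m$ (because $I$ is negligible unless $\zeta\approx-\theta e^d$, which forces $|\theta|\approx 2^j$ and hence $\xi_m\approx\theta e^m$).

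For part (i): Theorem~\ref{thm: SobolevBound} gives $\|\mathcal{R}_t^\e(f\mu)_j\|_{L^2_{(d-1)/2}}\lesssim\|(f\mu)_j\|_{L^2}$. Let $g$ be the piece of $\mathcal{R}_t^\e(f\mu)_j$ at frequency $\approx 2^j$. Since $g$ is frequency-localized at scale $2^j$, $\|g\|_{L^2}\lesssim 2^{-j(d-1)/2}\|g\|_{L^2_{(d-1)/2}}\le 2^{-j(d-1)/2}\|(f\mu)_j\|_{L^2}$, while the complementary piece has $L^2$ norm $\lesssim_N 2^{-Nj}\|f\|_{L^2(\mu)}$, so $\|\mathcal{R}_t^\e(f\mu)_j\|_{L^2}^2\lesssim 2^{-j(d-1)}\|(f\mu)_j\|_{L^2}^2+2^{-2Nj}\|f\|_{L^2(\mu)}^2$. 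The $L^2$ bound of Lemma~\ref{lem: LittlewoodPaleyBounds} turns the first term into $2^{-j(d-1)}2^{j(d-s)}\|f\|_{L^2(\mu)}^2=2^{j(1-s)}\|f\|_{L^2(\mu)}^2$, which absorbs the second for $N$ large, proving (i).

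For part (ii): the expansion above writes $\langle\mathcal{R}_t^\e(f\mu)_j,(f_1\mu)_{j_1}\otimes\cdots\otimes(f_{d-1}\mu)_{j_{d-1}}\rangle$ as an integral against $e^{-2\pi i t\theta}\widehat{\f}(\e\theta)\widehat{(f\mu)_j}(\zeta)\prod_m\overline{\widehat{(f_m\mu)_{j_m}}(\zeta_m)}\,I(\theta,\zeta,\zeta_1,\dots,\zeta_{d-1})$, with $|\zeta|\approx 2^j$ and $|\zeta_m|\approx 2^{j_m}$ on the support. An elementary case analysis in $|\theta|$ shows that when some $m_0$ has $|j-j_{m_0}|>5$, the quantity $1+|\theta e^d+\zeta|+\sum_m|\theta e^m-\zeta_m|$ is $\gtrsim 2^J$ throughout the domain, where $J=\max(j,j_1,\dots,j_{d-1})$: if $|\theta e^d+\zeta|$ is small then $|\theta|\approx 2^j$, so $|\theta e^{m_0}-\zeta_{m_0}|\gtrsim\big||\theta|-|\zeta_{m_0}|\big|\gtrsim 2^{\max(j,j_{m_0})}$; and if some $j_{m_1}$ exceeds $j$ by more than $5$ then $|\zeta_{m_1}|\gg|\theta|$, forcing $|\theta e^{m_1}-\zeta_{m_1}|\gtrsim 2^{j_{m_1}}$. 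Hence $|I|\lesssim_M 2^{-MJ/2}(1+|\theta e^d+\zeta|+\sum_m|\theta e^m-\zeta_m|)^{-M/2}$, and bounding $|\widehat{\f}(\e\theta)|\lesssim 1$ and $|\widehat{(f\mu)_j}|,|\widehat{(f_m\mu)_{j_m}}|$ by (function-dependent) constants times the indicators of their frequency annuli, integrating the residual factor in $\theta$ and then over the annuli $|\zeta|\approx 2^j$, $|\zeta_m|\approx 2^{j_m}$ (of total volume polynomial in $2^J$), yields $|\langle\cdots\rangle|\lesssim_M 2^{(C-M/2)J}$ with $C=C(d)$, which is $\lesssim_N 2^{-NJ}$ once $M\ge 2(N+C)$. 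The main obstacle is exactly this non-stationary-phase estimate for $I$ together with the case analysis: because the phase $\theta\det(x)$ has higher $x$-derivatives of size $\sim|\theta|$, each integration by parts gains only $|\text{gradient}|^{-1}$ after one checks — via the regime split — that $|\theta|$ is controlled by the gradient wherever the claimed bound is not already trivial; the geometric fact making everything go through is simply that $\nabla_{x^m}\det$ is close to $e^m$ on $\operatorname{supp}\eta$. The remaining ingredients — the symbol bounds for $\widehat{\f}(\e\theta)$, the Sobolev-to-$L^2$ conversion at a fixed frequency scale, and the volume counting — are routine.
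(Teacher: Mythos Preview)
Your proof is correct and follows essentially the same route as the paper: write $\mathcal{R}_t^\e$ as an oscillatory integral, use that $\nabla_{x^m}\det\approx e^m$ on $\operatorname{supp}\eta$ to run non-stationary phase on the kernel $I$, deduce frequency localization of $\mathcal{R}_t^\e(f\mu)_j$ at scale $2^j$, and then combine the Sobolev bound with Lemma~\ref{lem: LittlewoodPaleyBounds} for (i) and the rapid decay of $I$ with Plancherel for (ii). If anything your write-up is slightly more careful than the paper's, since you explicitly justify the uniformity in $\e$ via the order-$0$ symbol bounds on $\widehat{\f}(\e\theta)$ and spell out the regime split in the non-stationary-phase step.
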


\begin{proof}
We first prove that the Fourier transform of $\mathcal{R}_t^\e(f\mu)_j$ decays rapidly outside the region $|x^j|\approx 2^j$.  After we prove this, both statements follow from Plancherel and Theorem \ref{thm: SobolevBound}.  By Fourier inversion, we have

\[
\mathcal{R}_t^\e\mu_j(x^1,...,x^{d-1})=\int\int\int e^{2\pi i \xi^d\cdot x^d}e^{2\pi i \tau(\det(x)-t)}\widehat{(f\mu)_j}(x^d) \widehat{\f}(\e\tau)\eta(x) \:dx^d\:d\xi^d\:d\tau,
\]

and therefore

\[
\widehat{\mathcal{R}_t^\e\mu_j}(\xi^1,...,\xi^{d-1})= \int\int\int e^{2\pi i (\widetilde{\xi}\cdot x+\tau(\det(x)-t))} \widehat{(f\mu)_j}(\xi^d) \widehat{\f}(\e\tau)\eta(x) \:dx \:d\xi^d \:d\tau
\]

where $\widetilde{\xi}=(\xi^1,...,\xi^{d-1},-\xi^d)$.  This integral can be written

\[
\int\int \widehat{(f\mu)_j}(\xi^d) \widehat{\f}(\e\tau)I(\tau,\xi)\: d\tau \: d\xi^d,
\]

where

\[
I(\tau,\xi)=\int e^{2\pi i (\widetilde{\xi}\cdot x+\tau(\det(x)-t))} \eta(x)\:dx.
\]

This is an oscillatory integral with phase function

\[
\Phi_{\tau,\xi}(x)=\widetilde{\xi}\cdot x+\tau(\det(x)-t).
\]

We observe

\[
\nabla\Phi_{\tau,\xi}(x)=\widetilde{\xi}+\tau\cdot\nabla\det(x).
\]

For $x$ in the support of $\eta$, we have $\frac{1}{2}< |\nabla_{x^i}\det(x)-e^i| < 2$ if the constant $c$ in the statement of the theorem is sufficiently small.  Therefore, if $\Phi_{\tau,\xi}$ has critical points then we must have $\frac{1}{2}|\xi^i| \leq \tau\leq 2|\xi^i|$ for all $i$.  If $2^{j-2}<|\xi^d|<2^{j+2}$ and $2^{j_i-2}<|\xi^i|<2^{j_i+2}$ with $|j-j_i|>5$, then $\Phi_{\tau,\xi}$ has no critical points and by non-stationary phase (for example \cite{W03}, proposition 6.1) we have

\[
I(\tau,\xi)\lesssim_N 2^{-N\cdot \max(j,j_1,...,j_{d-1})}.
\]

It follows from this and Lemma \ref{lem: LittlewoodPaleyBounds} that

\begin{align*}
    \left\langle \mathcal{R}_t^\e (f\mu)_j,(f_1\mu)_{j_1}\otimes\cdots\otimes (f_{d-1}\mu)_{j_{d-1}}\right\rangle &= \left\langle \widehat{\mathcal{R}_t^\e (f\mu)_j},\widehat{(f_1\mu)_{j_1}}\otimes\cdots\otimes \widehat{(f_{d-1}\mu)_{j_{d-1}}}\right\rangle \\
    &=\int\int\widehat{(f_1\mu)_{j_1}}(\xi^1)\cdots\widehat{(f_{d-1}\mu)_{j_{d-1}}}(\xi^{d-1})\widehat{(f\mu)_j}(\xi^d)\widehat{\f}(\e\tau)I(\tau,\xi)\:d\tau\:d\xi \\
    &\lesssim_N 2^{-N\cdot\max(j,j_1,...,j_{d-1})}.
\end{align*}

It also follows that

\begin{align*}
    \|\mathcal{R}_t^\e(f\mu)_j\|_{L^2}^2&= \| \widehat{\mathcal{R}_t^\e(f\mu)_j}\|_{L^2}^2 \\
    &\lesssim 2^{-j(d-1)}\int_{|\xi| \approx 2^j} |\xi|^{d-1} \widehat{\mathcal{R}_t^\e(f\mu)_j}(\xi)d\xi \\
    &=2^{-j(d-1)}\|\mathcal{R}_t^\e(f\mu)_j\|_{L_{\frac{d-1}{2}}^2}^2 \\
    &\lesssim 2^{j(1-s)}\|f\|_{L^2(\mu)}^2
\end{align*}

\end{proof}

\section{Proofs}

\subsection{Proof of Theorem \ref{thm: MT1}}
Many Falconer type problems can be attacked by defining an appropriate pushforward measure and proving it is in $L^2$.  The following lemma establishes this framework.

\begin{lem}
\label{lem: ApproximateNuL2Bound}
Let $\mathcal{M}$ be an $n$-dimensional submanifold of $\R^m$ equipped with $n$-dimensional Lebesgue measure $\mathcal{L}_n$ and consider a map $\Phi:(\R^d)^k\to \mathcal{M}$.  For $E\subset \R^d$, let

\[
\Delta_\Phi(E)=\{\Phi(x):x\in E^k\}.
\]

If $\mu$ is a probability measure supported on a compact set $E$ and

\[
\e^{-n}\int\cdots\int_{|\Phi(x)-\Phi(y)|\lesssim \e}d\mu^k(x)\:d\mu^k(y)\lesssim 1,
\]

then $\mathcal{L}_n(\Delta_\Phi(E))>0$.
\end{lem}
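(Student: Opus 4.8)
The plan is to transport the product measure $\mu^k$ to the manifold, show that the resulting probability measure has an $L^2$ density with respect to $\mathcal{L}_n$, and then observe that a measure with an $L^2$ (hence locally $L^1$) density cannot be supported on an $\mathcal{L}_n$-null set. Set $\nu=\Phi_*(\mu^k)$, the pushforward of $\mu^k$ under $\Phi$. Since $\mu$ is a probability measure supported on $E$, so is $\mu^k$ on $E^k$, and hence $\nu$ is a Borel probability measure on $\mathcal{M}$; moreover $\Phi(E^k)=\Delta_\Phi(E)$, so $E^k\subset\Phi^{-1}(\Delta_\Phi(E))$ and thus $\nu(\Delta_\Phi(E))=1$. (When $\Phi$ is continuous and $E$ compact, $\Delta_\Phi(E)=\Phi(E^k)$ is compact, hence in particular $\mathcal{L}_n$-measurable, which is the only situation we need.)

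To produce the density I would mollify $\nu$ at scale $\e$. Cover a compact neighborhood of $\mathrm{supp}(\nu)$ in $\mathcal{M}$ by finitely many coordinate charts on which the ambient Euclidean distance on $\R^m$ is comparable to the Euclidean distance in the chart coordinates, fix a subordinate partition of unity, and in each chart convolve the relevant piece of $\nu$ with $\rho^\e(u)=\e^{-n}\rho(u/\e)$ for a fixed smooth bump $\rho\ge 0$ supported in $\{|u|\le 1\}$ with $\int\rho=1$; call the resulting function $\nu^\e$ on $\mathcal{M}$. By construction $\nu^\e\ge 0$ and $\nu^\e\,d\mathcal{L}_n\to\nu$ weakly. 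The crucial computation is that, writing $\widetilde g(u)=g(-u)$ and expanding the square in local coordinates,
\[
\|\nu^\e\|_{L^2(\mathcal{M})}^2 = \int\!\!\int \big(\rho^\e*\widetilde{\rho^\e}\big)\big(\Phi(x)-\Phi(y)\big)\,d\mu^k(x)\,d\mu^k(y),
\]
and since $\rho^\e*\widetilde{\rho^\e}$ is bounded by $O(\e^{-n})$ and supported in $\{|u|\le 2\e\}$,
\[
\|\nu^\e\|_{L^2(\mathcal{M})}^2 \lesssim \e^{-n}\int\cdots\int_{|\Phi(x)-\Phi(y)|\lesssim\e}d\mu^k(x)\,d\mu^k(y)\lesssim 1
\]
uniformly in $\e$ by the hypothesis.

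Then I would let $\e\to 0$ along a subsequence: since $\{\nu^\e\}$ is bounded in the Hilbert space $L^2(\mathcal{M})$, some subsequence converges weakly to an $F\in L^2(\mathcal{M})$; pairing against continuous compactly supported test functions and using $\nu^\e\,d\mathcal{L}_n\to\nu$ weakly identifies $F\,d\mathcal{L}_n=\nu$. Hence $\nu\ll\mathcal{L}_n$ with density $F\in L^2(\mathcal{M})$. If $\mathcal{L}_n(\Delta_\Phi(E))$ were zero, we would get $1=\nu(\Delta_\Phi(E))=\int_{\Delta_\Phi(E)}F\,d\mathcal{L}_n=0$, a contradiction, so $\mathcal{L}_n(\Delta_\Phi(E))>0$.

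The only real technicality is the localization onto the submanifold $\mathcal{M}$: one has to work in charts where the ambient metric (in which the hypothesis is phrased) and the coordinate metric (in which the mollifier and $\mathcal{L}_n$ are most conveniently described) are uniformly comparable, and check that the finitely many partition-of-unity pieces contribute boundedly. None of this is serious, since $\mathcal{M}$ is a smooth embedded submanifold and $\mathrm{supp}(\nu)$ is compact; the substance of the argument is the displayed identity for $\|\nu^\e\|_{L^2}^2$, which reduces the conclusion directly to the hypothesis.
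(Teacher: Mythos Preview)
Your argument is correct and follows essentially the same route as the paper: push forward $\mu^k$ to a measure $\nu$ on $\mathcal{M}$, mollify at scale $\e$, and expand $\|\nu^\e\|_{L^2}^2$ as a double integral that is dominated by the hypothesis. The only cosmetic differences are that the paper convolves with an ambient bump on $\R^m$ rather than localizing to charts, and concludes absolute continuity via the Cauchy--Schwarz estimate $\int_A\nu^\e\,d\mathcal{L}_n\le\mathcal{L}_n(A)^{1/2}\|\nu^\e\|_{L^2}$ instead of your weak-$L^2$ compactness argument.
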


\begin{proof}

Define a probability measure $\nu$ on $\mathcal{M}$ by the relation

\[
\int f(t)\:d\nu(t)=\int f(\Phi(x))\:d\mu^k(x).
\]

It suffices to prove $\nu$ is absolutely continuous with respect to $\mathcal{L}_n$.  Let $\f$ be a symmetric Schwartz function on $\R^m$ supported on the ball of radius $2$ and equal to $1$ on the unit ball.  Let $\f^\e(x)= \e^{-n}\f(x/\e)$ and let $\nu^\e=\f^\e*\nu$.  Then

\[
\int_A\nu^\e(t)dt\leq \mathcal{L}_n(A)^{1/2} \|\nu^\e\|_{L^2},
\]

where $dt$ denotes integration with respect to $n$-dimensional Lebesgue measure.  This reduces matters to proving an upper bound on $\|\nu^\e\|_{L^2}$ which is uniform in $\e$.  We have 
\begin{align*}
\nu^\e(t)&= \int \f^\e (t'-t) d\nu(t')\\
&=\int \f^\e (\Phi(x)-t) d\mu^k(x)\\
&\approx \e^{-n}\int_{|\Phi(x)-t|\leq \varepsilon/2}d\mu^k(x).
\end{align*}

Thus,

\begin{align*}
||\nu^\e||^2_{L^2}&\approx \e^{-2n}\int \left(\int\cdots\int_{|\Phi(x)-t|\leq \varepsilon/2 } d\mu^k(x) d\mu^k(y)   \right)dt\\
&=\e^{-2n}\int \cdots\int_{|\Phi(x)-\Phi(y)|\leq \varepsilon}\left(\int_{|\phi(x)-t|\leq \varepsilon/2 } dt   \right)d\mu^k(x) d\mu^k(y)\\
&\approx \e^{-n}\int \cdots\int_{|\phi(x)-\phi(y)|\leq \varepsilon}d\mu^k(x) d\mu^k(y) \\
&\lesssim 1
\end{align*}

\end{proof}


To apply this approach to our current problem, we first reduce to the case where our set $E\subset \R^d$ has some additional structure.

\begin{lem}
\label{lem: Paring}
Let $k \geq d$ and let $E\subset\R^d$ be a compact set with Hausdorff dimension $\dim E>d-1$.  Then there exist subsets $E_1,...,E_k\subset E$ and a constant $c$ with $\dim E_j=\dim E$ and the property that for any choice of $d$ points $x^1,...,x^d$ in different cells $E_j$, we have $\det(x^1,\cdots,x^d)>c$.
\end{lem}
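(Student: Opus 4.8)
The plan is to fix an auxiliary exponent $s$ with $d-1<s<\dim E$, take a Frostman probability measure $\mu$ on $E$ of exponent $s$ via Theorem \ref{thm: Frostman}, and then produce one $k$-point configuration in $E$ whose volume type has all $\binom{k}{d}$ coordinates bounded away from zero; small balls about its entries will be the cells $E_1,\dots,E_k$. The point is that configurations failing this condition are $\mu^k$-negligible.

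First I would note that $\mu$ annihilates every hyperplane through the origin: since $\operatorname{supp}\mu$ is bounded, such a hyperplane meets it in a set coverable by $O(\rho^{-(d-1)})$ balls of radius $\rho$, each of $\mu$-mass $\lesssim\rho^{s}$, so the total is $\lesssim\rho^{\,s-(d-1)}\to0$. Then, by induction on $m=1,\dots,d$, the set of $(x^1,\dots,x^m)\in E^m$ with $x^1,\dots,x^m$ linearly dependent is $\mu^m$-null: the case $m=1$ is that $\mu$ has no atoms, and in the inductive step a dependent $m$-tuple either has $(x^1,\dots,x^{m-1})$ dependent (null by induction and Fubini) or has $x^m\in\operatorname{span}(x^1,\dots,x^{m-1})$, which for independent $x^1,\dots,x^{m-1}$ is a linear subspace of dimension $m-1\le d-1$, hence lies in a hyperplane and is $\mu$-null; Fubini then closes the step. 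Specialising to $m=d$ in each of the $\binom{k}{d}$ coordinate $d$-subsets and applying Fubini once more,
\[
F=\bigl\{(x^1,\dots,x^k)\in E^k:\ \det(x^{j_1},\dots,x^{j_d})\neq0\text{ for all }1\le j_1<\dots<j_d\le k\bigr\}
\]
has $\mu^k(F)=1$.

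For the extraction, let $F_n$ be the compact set of configurations at which every coordinate of the volume type has modulus $\ge 1/n$; the $F_n$ increase to $F$, so $\mu^k(F_{n_0})>\tfrac12$ for some $n_0$, and I put $c=\tfrac1{2n_0}$. Since $\mu((\operatorname{supp}\mu)^c)=0$, I may choose $q=(q^1,\dots,q^k)\in F_{n_0}$ with every $q^j\in\operatorname{supp}\mu$. By uniform continuity of the polynomial $\det$ on the compact set $E^d$ there is $\delta>0$ such that $|\det(x^1,\dots,x^d)-\det(q^{j_1},\dots,q^{j_d})|<c$ whenever $|x^i-q^{j_i}|\le\delta$ for all $i$, uniformly over the finitely many $d$-subsets. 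Setting $E_j=E\cap\overline{B(q^j,\delta)}$, each $\det(x^{j_1},\dots,x^{j_d})$ keeps a fixed sign and modulus $\ge c$ on $E_{j_1}\times\dots\times E_{j_d}$, which is the required separation of volumes; and since $q^j\in\operatorname{supp}\mu$, the restriction $\mu|_{\overline{B(q^j,\delta)}}$ is a non-zero measure supported on $E_j$ still obeying the exponent-$s$ Frostman bound, so the mass distribution principle gives $\dim E_j\ge s$.

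The subtle point is the tension between wanting the cells large and wanting the volumes on them uniformly positive; it is resolved by forcing the auxiliary configuration to lie in $\operatorname{supp}\mu$, so that shrinking to small balls about its entries loses no dimension down to the level $s$. The construction thus yields cells of dimension at least $s$ for any $s<\dim E$, which — with $s$ chosen close to $\dim E$ — is exactly what the $L^2$ argument that follows uses in place of the literal equality $\dim E_j=\dim E$. Everything else (the hyperplane estimate, where the hypothesis $\dim E>d-1$ enters; the inductive negligibility; the pigeonhole; the continuity step) is routine.
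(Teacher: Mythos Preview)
Your argument is correct, and it reaches the same conclusion by a somewhat different route than the paper. Both proofs begin with a Frostman measure $\mu$ of exponent $s>d-1$ and rest on the single geometric fact that proper linear subspaces (hence hyperplanes through the origin) carry zero $\mu$-mass. The paper then builds the configuration greedily: having placed $x^1,\dots,x^j$, it throws away a thin tube around each hyperplane they span, notes that this bad set has $\mu$-measure $\lesssim 2^{-N(s-(d-1))}$, and picks $x^{j+1}$ outside it; the cells are small balls about the resulting points. You instead argue globally: by Fubini and the hyperplane estimate, the set of $k$-tuples with some degenerate $d$-subtuple is $\mu^k$-null, so a generic configuration in $(\operatorname{supp}\mu)^k$ already has every $\binom{k}{d}$ determinant nonzero; a pigeonhole and a continuity step then give the cells. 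Your version is a little cleaner and makes the dependence on the hypothesis $\dim E>d-1$ very transparent, while the paper's recursive construction is slightly more hands-on but equivalent in strength.

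On the equality $\dim E_j=\dim E$: you are right that your construction, as written, only yields $\dim E_j\ge s$ for the chosen auxiliary $s<\dim E$, via the mass distribution principle applied to $\mu\!\restriction_{E_j}$. The paper's own proof is in exactly the same position---with a single Frostman exponent $s$ fixed at the outset, one only secures positive $\mu$-mass on each cell and hence $\dim E_j\ge s$. As you observe, this is precisely what the downstream $L^2$ estimate needs, since the threshold in Theorem~\ref{thm: MT1} is open and one may take $s$ as close to $\dim E$ as one likes before running the argument.
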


\begin{proof}
Let $\mu$ be a Frostman probability measure on $E$ with exponent $s>d-1$ and let $N$ be a large integer to be determined later.  The idea of the proof is that the $2^{-N}$-neighborhood of a compact piece of a hyperplane has negligible $\mu$-measure, so we can construct our sets $E_j$ recursively by throwing away bad parts of $E$. \\

Given a point $x\in\R^d$, let $B(x)$ denote the ball of radius $2^{-N}$ centered at $x$.  Let $\mathcal{S}_0$ be a finite set such that $\{B(x):x\in\mathcal{S}_0$ covers $E$, and let $\mathcal{S}\subset\mathcal{S}_0$ be the subset obtained by discarding any $x$ such that $\mu(B(x))=0$.  Without loss of generality we may assume that none of our balls contains the origin. \\

Let $x^1,x^2\in\mathcal{S}$ be arbitrary points such that the balls $B(x^1)$ and $B(x^2)$ have distance $>2^{-N}$.  For $2\leq j\leq d-1$, suppose $x^1,...,x^j$ have been defined and are linearly independent.  Let $X$ denote the $2^{-N+10}$-neighborhood of $\text{span}(x^1,...,x^j)$ intersected with the ball of radius $\sup E$.  Then $\mu(X)\lesssim 2^{-N(s-j)}$.  Since $s>j$, for large $N$ this is small, so we can choose $x^{j+1}\in E\setminus X$.  It follows that $B(x^{j+1})$ does not intersect the $2^{-N}$ neighborhood of $\text{span}(x^1,...,x^j)$.  For $d\leq j< k$, suppose $x^1,...,x^j$ have been defined and have the property that for any $j_1,...,j_d\leq j$, $B(x^j)$ does not intersect the $2^{-N}$-neighborhood of $\text{span}(x^{j_1},...,x^{j_{d-1}})$.  Again, if $N$ is sufficiently large then the union of all $\binom{j}{d-1}$ approximate hyplerplanes determined by any $d-1$ of the points $x^1,...,x^j$ has small $\mu$ measure, so we can choose $x^{j+1}$ to avoid all of them as well.  It is clear that the collection $E_j:=B(x^j)$ has the desired properties.

\end{proof}

To prove Theorem \ref{thm: MT1}, by Lemmas \ref{lem: ApproximateNuL2Bound} and \ref{lem: Paring} it suffices to bound

\[
\tag{1}
\e^{-d(k-d)-1}\int\int_{|\Phi(x)-\Phi(y)|\lesssim \e}\:d\mu^k(x)\:d\mu^k(y)
\]

independent of $\e$.  We follow the approach used in \cite{GILP15} and \cite{M20} to reduce matters to the $k=d$ case.  We first decompose the $d\mu^k(y)$ factor into Littlewood-Paley pieces, reducing ($1$) to

\[
\tag{2}
\approx \e^{-d(k-d)-1}\sum_{j_1,...,j_k}\int\int_{|\Phi(x)-\Phi(y)|\lesssim \e}\mu_{j_1}(y^1)\cdots\ \mu_{j_k}(y^k)\:dy^1\cdots dy^k\:d\mu^k(x).
\]

Here $\{\mu_j\}$ are the Littlewood Paley pieces of $\mu$, as defined in Section 3.  Now that we have an integral in $dy$, we want to use the group action framework discussed in Section 2 to turn this into an integral over $\text{SL}_d(\R)$.  The idea is that for fixed $x$, integrating over the region $|\Phi(x)-\Phi(y)|<\e$ is equivalent to integrating over $y\sim gx$ as $g$ varies.  If $\e$ is sufficiently small then $\det(y^1,\cdots,y^d)\neq 0$ for $y$ in this region.  Every such $y$ has the same area type as a configuration of the form

\[
(x_1^1,\cdots,x_{d-1}^d,t_{d^2},\cdots,t_{kd}).
\]

Moreover, there is an open set $U_d\subset\R^{d^2-1}$ such that for every $(g_1^1,\cdots,g_{d-1}^d)\in U_d$ there exists a unique $g\in\text{SL}_d(\R)$ whose matrix has those entries, and the lower right entry is a rational function of the others.  This gives a rational change of variables

\[
y=g(x_1^1,\cdots,x_{d-1}^d,t_{d^2},\cdots,t_{kd}),
\]

where $g$ is viewed in terms of its coordinates.  Since $x$ lives in a fixed compact subset of configuration space, the Jacobian determinant is $\approx 1$ and ($2$) is

\[
\tag{3}
\approx \e^{-d(k-d)-1}\int\int\int_{B_\e}\left(\sum_{j_1,...,j_k} \mu_{j_1}\otimes\cdots\otimes\mu_{j_k}\right)(g(x_1^1,\cdots,x_{d-1}^d,t_{d^2},\cdots,t_{kd}))\:dg\:dt\:d\mu^k(x),
\]

where the two inner integral signs represent integration over the first $d^2-1$ coordinates of $g$ and the $d(k-d)+1$ coordinates $\{t_i\}$, respectively.  The $t_i$ coordinates are integrated over the ball $B_\e$ raidus $\e$ centered at the last $d(k-d)+1$ coordinates of $x$.  Taking the limit as $\e\to 0$, this is

\[
\tag{4}
\sum_{j_1,...,j_k}\int\int\cdots\int \mu_{j_1}(gx^1)\cdots \mu_{j_k}(gx^k)\:d\mu(x^1)\cdots\:d\mu(x^k)\:dg.
\]

Here we make a couple simple reductions.  First, $\mu_j$ is a Schwarz function satisfying the $L^\infty$ bound $\|\mu_j\|_{L^\infty}\lesssim 2^{j(d-s)}$ (see for example \cite{M20}, Lemma 3) which we use to reduce from general $k\geq d$ to the $k=d$ case.  Moreover, the sum over $j_1,...,j_k$ can be reduced to the sum over indices satisfying $j_1\geq \cdots\geq j_k\geq 0$, as negative indices clearly contribute $O(1)$ to the sum and other permutations of indices only change the sum by a multiplicative constant.  Applying the $L^\infty$ bound and running the sum in the indices $j_{d+1},...,j_k$, it follows that ($4$) is

\[
\lesssim \sum_{j_1\geq \cdots\geq j_d}2^{j_d(d-s)(k-d)}\int\int\cdots\int \mu_{j_1}(gx^1)\cdots\mu_{j_d}(gx^d)\:d\mu(x^1)\cdots\:d\mu(x^d)\:dg
\tag{5}.
\]

This reduces matters to the $k=d$ case.  Using the same change of variables in the other direction, this is

\begin{align*}
&\approx \e^{-1}\sum_{j_1\geq \cdots\geq j_d}2^{j_d(d-s)(k-d)} \int\cdots\int_{|\det(x^1,...,x^d)-\det(y^1,...,y^d)|<\e} \mu_{j_1}(y^1)\cdots \mu_{j_d}(y^d)\:dy\:d\mu^k(x)\\
&\approx \e^{-2}\sum_{j_1\geq \cdots\geq j_d}2^{j_d(d-s)(k-d)} \int\int\cdots\int_{\substack{|\det(x^1,\cdots,x^d)-t|<\e \\ |\det(y^1,\cdots,y^d)-t|<\e}} \mu_{j_1}(y^1)\cdots \mu_{j_d}(y^d)\:dy\:d\mu^k(x)\:dt. \\
&\approx \sum_{j_1\geq \cdots\geq j_d}2^{j_d(d-s)(k-d)}
\int\left(\e^{-1}\int_{|\det(x^1,\cdots,x^d)-t|<\e}d\mu^k(x)\right)\left\langle \mathcal{R}_t^\e\mu_{j_1},\mu_{j_2}\otimes\cdots\otimes \mu_{j_{d}}\right\rangle dt
\tag{6}
\end{align*}

where $\left\langle \cdot,\cdot\right\rangle$ denotes the inner product on $L^2((\R^d)^{d-1})$ and $\mathcal{R}_t^\e$ is the approximation to the generalized Radon transform discussed in Section 2.  Let

\[
\nu_{k,d}^\e(t)=\int_{|\det(x^{i_1},\cdots,x^{i_d})-t|<\e}d\mu^k(x).
\]

The quantity in ($1$) we are trying to bound is $\|\nu_{k,d}^\e\|_{L^2}^2$, and we have proved.

\[
\|\nu_{k,d}^\e\|_{L^2}^2\lesssim \sum_{j_1\geq \cdots\geq j_d}2^{j_d(d-s)(k-d)}\int\nu_{d,d}^\e(t)\left\langle \mathcal{R}_t^\e\mu_{j_1},\mu_{j_2}\otimes\cdots\otimes \mu_{j_{d}}\right\rangle \:dt,
\]

Let

\[
S=\sum_{j_1\geq \cdots\geq j_d\geq 0}2^{j_d(d-s)(k-d)}\sup_t\left\langle \mathcal{R}_t^\e\mu_{j_1},\mu_{j_2}\otimes\cdots\otimes \mu_{j_{d}}\right\rangle.
\]

If $S$ is finite, we have

\[
\|\nu_{k,d}^\e\|_{L^2}^2\lesssim \|\nu_{d,d}^\e\|_{L^2}.
\]

Plugging in $k=d$ on the left, we have a uniform bound on $\|\nu_{d,d}^\e\|_{L^2}$ which in turn implies a uniform bound on $\|\nu_{k,d}^\e\|_{L^2}$ for all $k\geq d$.  So, it suffices to prove $S$ is finite under the hypotheses of Theorem \ref{thm: MT1}.  By Lemma \ref{lem: GRTBounds} it is clear that the part of the sum corresponding to indices with $j_d<j_1-5$ converges.  It also follows from Lemma \ref{lem: GRTBounds} and Cauchy-Schwarz that

\[
\sup_t\left\langle \mathcal{R}_t^\e\mu_j,\mu_j\otimes\cdots\otimes \mu_j\right\rangle \lesssim 2^{\frac{j}{2}(1-s+(d-s)(d-1))}.
\]

Therefore,

\[
S\lesssim \sum_{j\geq 0} 2^{j\left((d-s)(k-d)+\frac{1-s+(d-s)(d-1)}{2}\right)}.
\]

The sum will converge if $s>d-\frac{d-1}{2k-d}$, as claimed.
\subsection{Proof of Theorem \ref{thm: MT3}}

To prove Theorem \ref{thm: MT3}, it is enough to establish the following bound.  The theorem then follows from Lemma \ref{lem: ApproximateNuL2Bound}.

\begin{lem}\
\label{lem:chainslemma}
Let $\f^\e$ be an approximation to the identity on $\R$, and let

\[
J_{t,k}^\e=\int\left(\prod_{j=1}^{k+1-d} \f^\e(\det(x^j,...,x^{j+d-1})-t_j)\right) \:d\mu^k(x).
\]

For every $k\geq d$ there is a constant $C_k$ (which does not depend on $t$ or $\e$) such that $J_{t,k}^\e \leq C_k$.

\end{lem}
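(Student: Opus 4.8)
The plan is to induct on $k$, peeling off one vertex of the chain at a time and using the Sobolev bound for the determinant Radon transform (Theorem \ref{thm: SobolevBound}) together with the Littlewood--Paley estimates of Lemma \ref{lem: GRTBounds}. The starting point is the same reduction used for Theorem \ref{thm: MT1}: after applying Lemma \ref{lem: Paring} we may assume $E$ is replaced by pieces $E_1,\dots,E_k$ so that every relevant determinant $\det(x^j,\dots,x^{j+d-1})$ is bounded below by a fixed constant, hence stays away from the degenerate locus, and the approximate Radon transforms $\mathcal{R}_{t}^\e$ are the right local model for integrating against $\f^\e(\det(x^j,\dots,x^{j+d-1})-t_j)$. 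Writing $d\mu^k(x)=d\mu(x^1)\cdots d\mu(x^k)$ and decomposing each factor $d\mu(x^i)$ into Littlewood--Paley pieces $\mu_{\ell_i}$, the quantity $J_{t,k}^\e$ becomes a sum over $(\ell_1,\dots,\ell_k)$ of integrals in which each factor $\f^\e(\det(x^j,\dots,x^{j+d-1})-t_j)$ couples exactly $d$ consecutive variables. Negative indices contribute $O(1)$ as usual, so we may restrict to $\ell_i\geq 0$.

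The key structural point is that the chain hypergraph is \emph{sparse}: vertex $x^1$ appears in only the single hyperedge $(x^1,\dots,x^d)$. So I would first integrate out $x^1$ against that one factor, i.e. estimate, for fixed $x^2,\dots,x^d$ and fixed $t_1$,
\[
\int \f^\e\big(\det(x^1,x^2,\dots,x^d)-t_1\big)\,d\mu_{\ell_1}(x^1)\;=\;\mathcal{R}_{t_1}^\e\mu_{\ell_1}\big(\text{evaluated with the arguments }x^2,\dots,x^d\big)
\]
up to the harmless relabeling of which variable plays the role of the ``last'' one. By Lemma \ref{lem: GRTBounds}(i) (or rather the version of Theorem \ref{thm: SobolevBound} after Plancherel) this is controlled in $L^2$ of the remaining $d-1$ variables by $2^{\ell_1(1-s)/2}$ times $\|1\|_{L^2(\mu)}\sim 1$, while Lemma \ref{lem: GRTBounds}(ii) kills all cross-terms where $\ell_1$ is far from the indices $\ell_2,\dots,\ell_d$ of the other variables in that hyperedge. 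Combining these, summing the geometric series in $\ell_1$ (which converges precisely when $s>d-1+\tfrac1d$, mirroring exactly the computation that produced that exponent in \cite{GIT20} for the $k=d$ case), and using the $L^\infty$ bound $\|\mu_{\ell}\|_{L^\infty}\lesssim 2^{\ell(d-s)}$ from Lemma \ref{lem: LittlewoodPaleyBounds} to absorb the resulting function of $x^2$ back into a constant multiple of $\mu$-mass, I obtain
\[
J_{t,k}^\e\;\lesssim\; J_{t',k-1}^\e,
\]
where $J_{t',k-1}^\e$ is the chain functional for the configuration $x^2,\dots,x^k$ (a $(k-1)$-point chain of the same type), with a constant depending only on $d$ and on the convergent sum over $\ell_1$. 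Iterating $k-d$ times reduces everything to the base case $k=d$, which is a single factor $\f^\e(\det(x^1,\dots,x^d)-t)$ integrated against $d\mu^d$; that case is exactly $\|\nu_{d,d}^\e\|_{L^1}$-type quantity and is bounded by the same $\mathcal{R}_t^\e$ argument (it is, in essence, the content of Theorem \ref{thm: MT1} specialized to $k=d$, which holds under $\dim E>d-1+\tfrac1d$).

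The main obstacle I anticipate is making the ``integrate out one vertex, then reabsorb'' step fully rigorous at the level of the pushforward measures: after pairing $x^1$ with its hyperedge via $\mathcal{R}_{t_1}^\e$ one is left with a function of $x^2,\dots,x^d$ that must be controlled \emph{uniformly in $t_1$} and then fed back into an integral of exactly the same form over the shorter chain, without losing powers of $\e$ or of $2^{\ell_1}$. This requires interleaving the Littlewood--Paley sums carefully: the index $\ell_1$ should be summed immediately (using (i), (ii), and Cauchy--Schwarz in the $d-1$ remaining variables of that hyperedge), but because $x^2$ is shared with the next hyperedge $(x^2,\dots,x^{d+1})$, one must not sum $\ell_2$ until that next hyperedge is processed. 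Organizing the order of summation so that each index is summed exactly once, against the hyperedge at which its variable is ``eliminated,'' and checking that the constant accrued at each step is a convergent geometric sum under the single hypothesis $s>d-1+\tfrac1d$, is the technical heart of the argument; the near-orthogonality estimate Lemma \ref{lem: GRTBounds}(ii) is what makes the bookkeeping tractable, since it confines each sum to a bounded window around the indices of the neighboring variables.
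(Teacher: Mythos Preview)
Your skeleton---induct on $k$ by peeling off a leaf vertex and invoking the Radon transform bounds---matches the paper's approach.  The gap is in the inductive step itself.  You aim for a recursion of the form $J_{t,k}^\e\lesssim J_{t',k-1}^\e$, and propose to obtain it by bounding $\sum_{\ell_1}\mathcal{R}_{t_1}^\e\mu_{\ell_1}$ via Lemma~\ref{lem: GRTBounds} and then ``reabsorbing'' with the $L^\infty$ bound on $\mu_\ell$.  This does not work: after integrating out the leaf, the leftover function $\mathcal{R}_{t_1}^\e\mu$ of the $d-1$ shared variables is \emph{not} bounded (only $L^2(\mu^{d-1})$, as the base case shows), so it cannot simply be absorbed into the shorter chain.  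Your proposed use of Lemma~\ref{lem: GRTBounds}(ii) to diagonalize in $\ell_1$ also fails as stated, because that lemma concerns the Lebesgue pairing $\langle\mathcal{R}_t^\e\mu_{\ell_1},\mu_{\ell_2}\otimes\cdots\otimes\mu_{\ell_d}\rangle$, whereas here the variables $x^2,\dots,x^d$ carry the additional factors $\f^\e(\det(x^2,\dots,x^{d+1})-t_2)\cdots$ from subsequent hyperedges, so the inner product does not separate.  The ``careful bookkeeping'' you sketch at the end does not resolve this: any Cauchy--Schwarz in the shared variables that uses up $\mu_{\ell_2},\dots,\mu_{\ell_d}$ destroys exactly the structure needed to recognize $J_{t',k-1}^\e$ in the remainder.

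The paper's remedy is twofold.  First, it proves the base case $k=d$ not merely as a bound on $J_{t,d}^\e$ but as an \emph{operator norm} estimate: by testing $\mathcal{R}_t^\e(f\mu)_j$ against tensor products $g_1\otimes\cdots\otimes g_{d-1}$ in $L^2(\mu^{d-1})$, Littlewood--Paley decomposing each $g_i\mu$, and applying both parts of Lemma~\ref{lem: GRTBounds}, one gets
\[
\|\mathcal{R}_t^\e\mu_j\|_{L^2(\mu^{d-1})}\lesssim 2^{\frac{j}{2}(1-s+(d-1)(d-s))},
\]
summable in $j$ exactly when $s>d-1+\tfrac1d$.  Second, for $k>d$ the paper applies a single Cauchy--Schwarz in $L^2(\mu^{k-1})$ to the \emph{entire} remaining integrand, which yields
\[
J_{t,k}^\e\;\lesssim\;\bigl(J_{\tilde t,k-1}^\e\bigr)^{1/2}\sum_j\|\mathcal{R}_{t_{k+1-d}}^\e\mu_j\|_{L^2(\mu^{d-1})}\;\lesssim\;\bigl(J_{\tilde t,k-1}^\e\bigr)^{1/2}.
\]
The square root, which you tried to avoid, is harmless for the induction; the point is that Cauchy--Schwarz against the full $\chi_{\tilde t,k-1}^\e$ decouples the last hyperedge from the rest without any attempt to diagonalize across the shared variables.
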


\begin{proof}
We first prove a bound in the case $k=d$.  Since $J_{t,d}^\e\approx \sum_j\|\mathcal{R}_t^\e\mu_j\|_{L^1(\mu^{d-1})}$, it is enough to prove $\|\mathcal{R}_t^\e\mu_j\|_{L^2(\mu^{d-1})}\lesssim 2^{-cj}$ for some positive $c$.  To accomplish this, fix $t$ and let $T_j^\e f=\mathcal{R}_t^\e(f\mu)_j$.  We want to bound the norm of $T_j^\e$ as an operator $L^2(\mu) \to L^2(\mu^{d-1})$.  To do this, let $g\in L^2(\mu^{d-1})$ be given by $g(x)=g_1(x^1)\cdots g_{d-1}(x^{d-1})$ with $g_i\in L^2(\mu)$.  Using Littlewood-Paley decomposition, Lemma \ref{lem: GRTBounds}, and Cauchy-Schwarz we have

\begin{align*}
    \left\langle T_j^\e f,g \right\rangle_{L^2(\mu^{d-1})} 
    &\lesssim \left\langle \mathcal{R}_t^\e (f\mu)_j,(g_1\mu)_j\otimes \cdots \otimes (g_{d-1}\mu)_j \right\rangle \\
    &\lesssim 2^{\frac{j}{2}(1-s+(d-s)(d-1))}\|f\|_{L^2(\mu)}\|g\|_{L^2(\mu^{d-1})}.
\end{align*}

It follows that the operator norm, and hence $\|\mathcal{R}_t^\e\mu_j\|_{L^1(\mu^{d-1})}$, is bounded by $2^{\frac{j}{2}(1-s+(d-s)(d-1))}$, and this series converges when $s>d-1+\frac{1}{d}$.  This gives the desired bound in the case $k=d$. \\

For $k>d$, let 

\[
\chi_{t,k}^\e(x)=\prod_{j=1}^{k+1-d} \f^\e(\det(x^j,...,x^{j+d-1})-t_j).
\]

We have

\begin{align*}
    J_{t,k}^\e&=\int \chi_{t,k}^\e(x)d\mu^k(x) \\
    &=\int\chi_{\tilde{t},k-1}^\e(\tilde{x})\f^\e( \det(x^{k+1-d},...,x^k)-t_{k+1-d})d\mu^{k-1}(\tilde{x}) d\mu(x^k) \\
    &\approx \sum_j \int\chi_{\tilde{t},k-1}^\e(\tilde{x}) \mathcal{R}_{t_{k+1-d}}^\e\mu_j(x^{k+1-d},...,x^{k-1})d\mu^{k-1}(\tilde{x}) \\
    &\lesssim (J_{\tilde{t},k-1}^\e)^{1/2}\sum_j \|\mathcal{R}_{t_{k+1-d}}\mu_j\|_{L^2(\mu^{k-1})} \\
    &\lesssim (J_{\tilde{t},k-1}^\e)^{1/2}
\end{align*}
\end{proof}

Let $\Phi(x)=(\det(x^1,...,x^d),...,\det(x^{k+1-d},...,x^k))$.  We have

\[
    \e^{-(k+1-d)}\int\int_{|\Phi(x)-\Phi(y)|<\e}d\mu^k(x)d\mu^k(y) \lesssim \int J_{\Phi(x),k}^\e d\mu^k(x) \lesssim 1.
\]

Theorem \ref{thm: MT3} then follows from Lemma \ref{lem: ApproximateNuL2Bound}.

\subsection{Proof of Sharpness Theorem}
We conclude this paper by proving Theorem \ref{thm:sharpness}. Let $\Lambda_{q,s}$ be the $q^{-\frac{d}{s}}$-neighborhood of $\frac{1}{q}\left(\mathbb{Z}^d\bigcap \left([\frac{q}{2},q]\times [0,q]^{d-1}\right)\right)$, the right half of the lattice in the $(d-1)$-dimensional unit cube with spacing $\frac{1}{q}$ (see figure 2). By Theorem 8.15 in \cite{F286} we can choose a sequence $q_n$ that increases sufficiently rapidly such that 
$$\dim \displaystyle\left(\bigcap_{n}\Lambda_{q_n,s}\right)=s$$

Thus, for large $q$ we may regard $\Lambda_{q,s}$ as an approximation to a set of Hausdorff dimension $s$. Let us modify this situation to fit our problem. By Lemma 1.8 in \cite{F286} we have

\begin{lem}[\cite{F286}, Lemma 1.8]
Let $\psi$ be Lipschitz and surjective, and let $\mathcal{H}^s$ be the s-dimensional Hausdorff measure. Then $\mathcal{H}^s(F)\lesssim \mathcal{H}^s(E)$.
\end{lem}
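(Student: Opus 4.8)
The statement is the standard fact that a Lipschitz surjection cannot increase $s$-dimensional Hausdorff measure by more than a constant depending only on the Lipschitz constant, and the plan is to verify it directly from the definition of $\mathcal{H}^s$ via economical covers. Write $\psi\colon E\to F$ for the map, with $F=\psi(E)$ by surjectivity, and let $L$ be its Lipschitz constant, so that $\operatorname{diam}\psi(A)\le L\operatorname{diam}A$ for every $A\subset E$. If $L=0$ the map is constant, $F$ is a point, and there is nothing to prove, so assume $L>0$.

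Recall that for $\delta>0$ one sets
\[
\mathcal{H}^s_\delta(E)=\inf\Big\{\sum_i(\operatorname{diam}U_i)^s:\ E\subset\bigcup_i U_i,\ \operatorname{diam}U_i\le\delta\Big\},
\]
and $\mathcal{H}^s(E)=\lim_{\delta\to 0}\mathcal{H}^s_\delta(E)=\sup_{\delta>0}\mathcal{H}^s_\delta(E)$. The first step is the cover comparison: given any cover $\{U_i\}$ of $E$ by sets of diameter at most $\delta$, the sets $V_i:=\psi(U_i\cap E)$ cover $F$, because $\psi$ is surjective and every point of $E$ lies in some $U_i$; moreover $\operatorname{diam}V_i\le L\operatorname{diam}(U_i\cap E)\le L\delta$. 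Hence $\{V_i\}$ is an admissible cover of $F$ at scale $L\delta$, and
\[
\mathcal{H}^s_{L\delta}(F)\le\sum_i(\operatorname{diam}V_i)^s\le\sum_i\big(L\operatorname{diam}U_i\big)^s=L^s\sum_i(\operatorname{diam}U_i)^s.
\]
Taking the infimum over all such covers $\{U_i\}$ of $E$ yields $\mathcal{H}^s_{L\delta}(F)\le L^s\,\mathcal{H}^s_\delta(E)\le L^s\,\mathcal{H}^s(E)$.

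The second and final step is to let $\delta\to 0$: since $L\delta\to 0$ as well, and $\mathcal{H}^s_\eta(F)$ is monotone as $\eta\to 0$, we obtain $\mathcal{H}^s(F)=\lim_{\delta\to 0}\mathcal{H}^s_{L\delta}(F)\le L^s\,\mathcal{H}^s(E)$, which is precisely the claimed estimate with implied constant $L^s$. There is no genuine obstacle here; the only points requiring care are that one must track the two scales $\delta$ and $L\delta$ separately through the comparison rather than trying to keep a single scale, that surjectivity is exactly what guarantees the pushed-forward family still covers all of $F$, and that one intersects with $E$ before applying $\psi$ so the Lipschitz bound applies. I would also remark that the same argument localizes to give $\mathcal{H}^s(\psi(A))\le L^s\,\mathcal{H}^s(A)$ for every $A\subset E$, which is the form in which the inequality is used in the sharpness construction.
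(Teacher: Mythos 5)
Your proof is correct and is exactly the standard covering argument: push a $\delta$-cover of $E$ forward to an $L\delta$-cover of $F$, pick up the factor $L^s$, take infima, and let $\delta\to 0$. The paper does not prove this lemma but simply cites Falconer's book, where the same argument appears, so there is nothing to compare beyond noting that your write-up (including the careful handling of the two scales $\delta$ and $L\delta$) matches the canonical proof.
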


As consequence of this lemma we have $\dim F \leq \dim E$. If $\psi$ is bijective and Lipschitz in both directions, then $\dim F = \dim E$. Let $E_{q,s}$ (figure 3) be the image of $\Lambda_{q,s}$ under the spherical map

$$\psi(x_1,x_2,\cdots,x_d)=x_1\left(\cos\left(\frac{\pi x_2}{2}\right),\sin\left(\frac{\pi x_2}{2}\right)\cos\left(\frac{\pi x_3}{2}\right),\cdots,\prod_{i=2}^{d-1}\sin\left(\frac{\pi x_i}{2}\right)\cos\left(\frac{\pi x_{d}}{2}\right), \prod_{i=2}^{d}\sin\left(\frac{\pi x_i}{2}\right)\right)$$.

 is not hard to check this map is injective on $[\frac{1}{2},1]\times [0,1]^{d-1}$ and therefore bijective as a map $\Lambda_{q,s}\rightarrow E_{q,s}$. Moreover, let us fix a sequence $q_n$ such that $\dim \displaystyle \left(\bigcap_{n} E_{q_n,s}\right)=s$ and call $E_{s}=\bigcap_{n} E_{q_n,s}$. It remains to prove $\mathcal{L}_{d(k-d)+1}\left(\mathcal{V}_{k,d}\left( E_{s}\right)\right)=0.$ \\

\begin{figure}[ht!]
  \centering
  \begin{minipage}[b]{0.4\textwidth}
    \includegraphics[width=\textwidth]{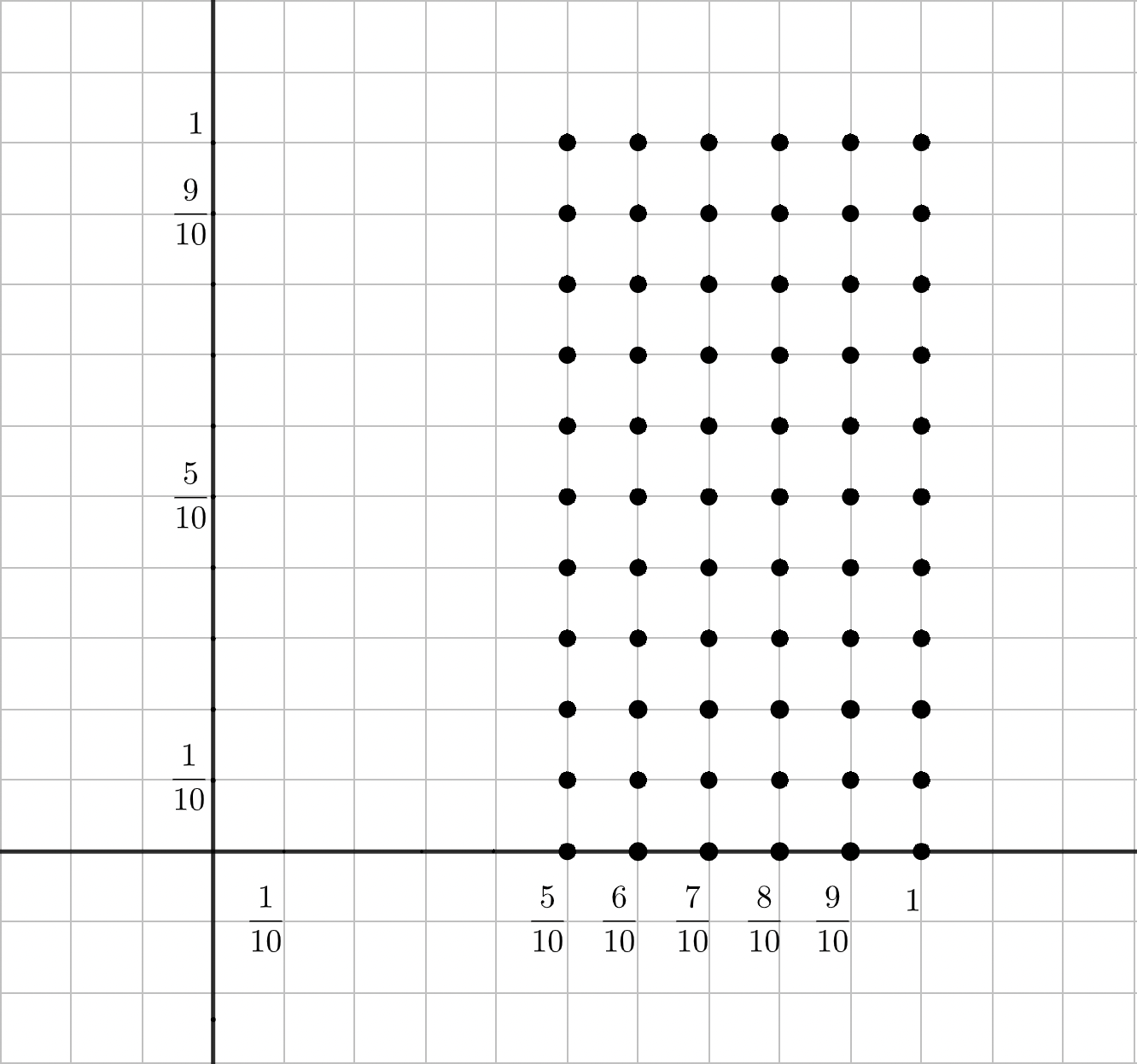}
    \caption{$\Lambda_{10,s}$ for $d=2$.}
  \end{minipage}
  \hfill
  \begin{minipage}[b]{0.4\textwidth}
    \includegraphics[width=\textwidth]{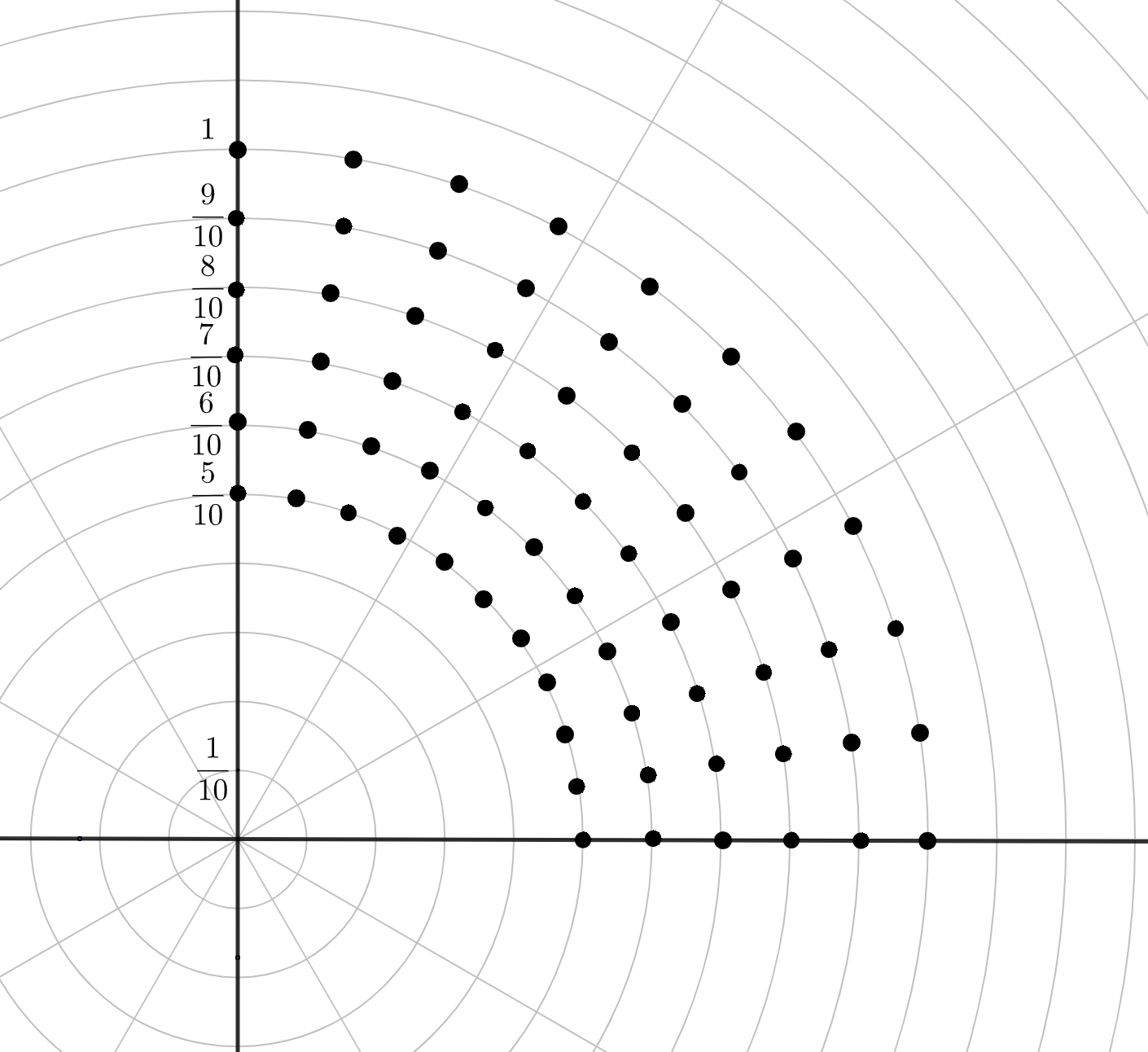}
    \caption{$E_{10,s}$ for $d=2$.}
  \end{minipage}
\end{figure}


We begin by counting the number of volume types determined by the image of $\frac{1}{q}\left(\mathbb{Z}^d\bigcap \left([\frac{q}{2},q]\times [0,q]^{d-1}\right)\right)$ under $\psi$ (i.e., the sperical lattice points themselves and not the thickened set).  It is clear that every volume type of this set is obtained by considering configurations with $x^1$ restrained to the first axis, and $x^2,...,x^k$ unrestrained.  Thus there are $\approx q$ choices for $x^1$ and $\approx q^d$ choices for $x^2,...,x^k$.  It follows that

$$\mathcal{L}_{d(k-d)+1}\left(\mathcal{V}_{k,d}(E_{q,s})\right)\lesssim \left(q^{-\frac{d}{s}}\right)^{d(k-d)+1}q^{d(k-1)+1}$$

This tends to 0 as $q\rightarrow\infty$ provided $s<d-\frac{d^2(d-1)}{d(k-1)+1}$.

\end{document}